\DeclareSymbolFontAlphabet{\mathbb}{AMSb}
\DeclareSymbolFontAlphabet{\mathbbl}{bbold}
\theoremstyle{plain}
\newtheorem{theorem}{Theorem}[section] 
\newtheorem{lemma}[theorem]{Lemma}
\newtheorem{proposition}[theorem]{Proposition}
\newtheorem{corollary}[theorem]{Corollary}
 \theoremstyle{definition}
\newtheorem{example}{Example} 
\newtheorem*{remark*}{Remark} 
\newtheorem{remark}[theorem]{Remark}
\newcommand{\R}{\mathbb{R}}
\newcommand{\Rd}{{\R^{d}}}
\newcommand{\NN}{\mathbb{N}}
\newcommand{\ind}{\mathds{1}}
\renewcommand{\leq}{\leqslant}
\renewcommand{\geq}{\geqslant}
\newcommand{\Z}{\int^{\infty}_{0}}
\def \PP{\mathbb{P}}
\def \EE{\mathbb{E}}
\def \setA{F}
\def\({\left(} 
\def\){\right)} 
\def\[{\left[}
\def\]{\right]} 
\def\<{\langle} 
\def\>{\rangle}
\DeclareMathOperator{\supp}{supp}
\def \Ca{\rm{(C1)}}
\def \Cb{\rm{(C2)}}
\def \Cc{\rm{(C3)}}
\def \Ch{\rm{(C8)}}
\def \Aaa{\rm{(A{\!}^{'}1)}}
\def \Abb{\rm{(A{\!}^{'}2)}}
\def \Acc{\rm{(A{\!}^{'}3)}}
\def \Na{\rm{(C2)}}
\def \Nb{\rm{(E1)}}
\def \Nc{\rm{(E2)}}
\def \Nd{\rm{(C3)}}
\newcommand{\lah}{\alpha_h}
\newcommand{\LM}{N} 
\newcommand{\gLM}{n} 
\newcommand{\LCh}{\Psi} 
\newcommand{\lmC}{C_N} 
\newcommand{\drf}{b} 
\newcommand{\ubC}{C_{0}} 
\newcommand{\rr}{\Upsilon} 
\newcommand{\uLM}{\nu_0} 
\newcommand{\uLCh}{\LCh_0} 
\newcommand{\uh}{h_0} 
\newcommand{\uK}{K_0} 
\newcommand{\ulah}{\alpha_{h_0}} 
\definecolor{ks}{rgb}{0.7,0.1,0.2}
\definecolor{zm}{RGB}{255,0,255}
\title{Estimates of heat kernels of non-symmetric L{\'e}vy processes}
\thanks{The research was partially supported by
 the 
German Science Foundation (SFB 701)
and
National Science Centre (Poland)
grant 2016/23/B/ST1/01665.}
\author[T. Grzywny]{Tomasz Grzywny }
\address{
	Wydzia{\lll} Matematyki,
	Politechnika Wroc{\lll}awska\\
	Wyb. Wyspia\'{n}skiego 27\\
	50-370 Wroc{\lll}aw\\
	Poland
}
\email{tomasz.grzywny@pwr.edu.pl}
\author[K. Szczypkowski]{Karol Szczypkowski}
\email{karol.szczypkowski@pwr.edu.pl}
\subjclass[2010]{Primary 60J35; Secondary 60J75, 60E07}
\keywords{heat kernel estimates,  transition density,
 L\'evy process, non-symmetric operator, non-local operator, non-symmetric Markov process,
semigroups of measures}
\date{}
\begin{document}


\def\1{{\bf 1}}
\def\ind{{\bf 1}}
\def\nn{\nonumber}
\newcommand{\I}{\mathbf{1}}

\def\sA {{\cal A}} \def\sB {{\cal B}} \def\sC {{\cal C}}
\def\sD {{\cal D}} \def\sE {{\cal E}} \def\sF {{\cal F}}
\def\sG {{\cal G}} \def\sH {{\cal H}} \def\sI {{\cal I}}
\def\sJ {{\cal J}} \def\sK {{\cal K}} \def\sL {{\cal L}}
\def\sM {{\cal M}} \def\sN {{\cal N}} \def\sO {{\cal O}}
\def\sP {{\cal P}} \def\sQ {{\cal Q}} \def\sR {{\cal R}}
\def\sS {{\cal S}} \def\sT {{\cal T}} \def\sU {{\cal U}}
\def\sV {{\cal V}} \def\sW {{\cal W}} \def\sX {{\cal X}}
\def\sY {{\cal Y}} \def\sZ {{\cal Z}}

\def\bA {{\mathbb A}} \def\bB {{\mathbb B}} \def\bC {{\mathbb C}}
\def\bD {{\mathbb D}} \def\bE {{\mathbb E}} \def\bF {{\mathbb F}}
\def\bG {{\mathbb G}} \def\bH {{\mathbb H}} \def\bI {{\mathbb I}}
\def\bJ {{\mathbb J}} \def\bK {{\mathbb K}} \def\bL {{\mathbb L}}
\def\bM {{\mathbb M}} \def\bN {{\mathbb N}} \def\bO {{\mathbb O}}
\def\bP {{\mathbb P}} \def\bQ {{\mathbb Q}} \def\bR {{\mathbb R}}
\def\bS {{\mathbb S}} \def\bT {{\mathbb T}} \def\bU {{\mathbb U}}
\def\bV {{\mathbb V}} \def\bW {{\mathbb W}} \def\bX {{\mathbb X}}
\def\bY {{\mathbb Y}} \def\bZ {{\mathbb Z}}
\def\R {{\mathbb R}} \def\RR {{\mathbb R}} \def\H {{\mathbb H}}
\def\n{{\bf n}} \def\Z {{\mathbb Z}}

\newcommand{\expr}[1]{\left( #1 \right)}
\newcommand{\cl}[1]{\overline{#1}}
\newtheorem{thm}{Theorem}[section]

\newtheorem{prop}[thm]{Proposition}

\numberwithin{equation}{section}
\def\ee{\varepsilon}
\def\qed{{\hfill $\Box$ \bigskip}}
\def\NN{{\mathcal N}}
\def\AA{{\mathcal A}}
\def\MM{{\mathcal M}}
\def\BB{{\mathcal B}}
\def\CC{{\mathcal C}}
\def\LL{{\mathcal L}}
\def\DD{{\mathcal D}}
\def\FF{{\mathcal F}}
\def\EE{{\mathcal E}}
\def\QQ{{\mathcal Q}}
\def\SS{{\mathcal S}}
\def\RR{{\mathbb R}}
\def\R{{\mathbb R}}
\def\L{{\bf L}}
\def\K{{\bf K}}
\def\S{{\bf S}}
\def\A{{\bf A}}
\def\E{{\mathbb E}}
\def\F{{\bf F}}
\def\P{{\mathbb P}}
\def\N{{\mathbb N}}
\def\eps{\varepsilon}
\def\wh{\widehat}
\def\wt{\widetilde}
\def\pf{\noindent{\bf Proof.} }
\def\pff{\noindent{\bf Proof} }
\def\cp{\mathrm{Cap}}

\begin{abstract}
We investigate 
densities of vaguely continuous convolution semigroups of probability measures on~$\Rd$.
First, we provide results that give upper estimates 
in a situation when the corresponding jump measure is allowed to be
highly non-symmetric.
Further, we prove upper 
estimates
of the 
density and its derivatives  if the jump measure compares
 with an isotropic unimodal measure and the characteristic exponent satisfies certain scaling condition.
Lower estimates are discussed  
in view of a recent development in that direction,
and in such a way to complement upper estimates.
We apply all those results to establish precise estimates of densities of non-symmetric 
L{\'e}vy processes.
\end{abstract}

\maketitle

\section{Introduction}

Heat kernels of stochastic processes have been studied since many years  
resulting in a large number of articles, where 
 various techniques and approaches
 were developed
(\cite{MR898496}, \cite{MR2492992}, \cite{MR1301283}, 
\cite{MR1402654}, \cite{TGBTMR}, \cite{MR2763092}, \cite{MR2357678}, \cite{MR2806700}, \cite{MR2886383}, \cite{MR3089797}, 
\cite{MR1744782}).
Among others,
heat kernels, also called densities, of  a rich class
of  L{\'e}vy processes were investigated
 (\cite{MR3139314}, \cite{MR2327834}, \cite{MR1829978}, \cite{MR3010850}, \cite{MR941977}, \cite{MR2925579}, \cite{MR2827465}, \cite{MR3666815}, \cite{MR3604626}, \cite{MR2995789}, \cite{MR2886459}).
A prominent example of a  L{\'e}vy process,
next to the Brownian motion with an explicit density,
 is
the isotropic $\alpha$-stable 
process
(\cite{MR0119247}, \cite{MR2274838}).
Sharp estimates of its density
stimulated the
analysis of
other subordinate Brownian motions (\cite{MR3570240}, \cite{MR2986850})
and even more general
isotropic unimodal L{\'e}vy processes (\cite{MR705619}, \cite{MR3165234}, \cite{MR3646773}).
For singular  $\alpha$-stable processes
or other (Lamperti, layered, relativistic, tempered, truncated) modifications and generalizations  see \cite{MR1085177}, 
\cite{MR1242643},
\cite {MR1220664},
 \cite{MR2320691}, \cite{MR2286060}, \cite{MR854867}, \cite{MR1829978}, \cite{MR2327834}, \cite{MR2307406}, \cite{MR2591907}, \cite{MR3604626}, \cite{MR2443765}, \cite{MR3413864}, \cite{MR1287843}, \cite{MR2025727}.
The list of directions and the
 literature above are most likely incomplete, thus
for more detailed descriptions 
we refer the reader to \cite{MR3139314}, \cite{MR2591907} (see also \cite{MR3604626}).

Most of the prior examples are symmetric L{\'e}vy processes, which initially attracted more attention.
For
 non-symmetric processes
one observes
 quite general 
but rather implicit estimates (\cite{MR3139314}, \cite{MR3235175}, \cite{MR3357585})
or studies  performed for very peculiar cases (\cite{MR0282413}, \cite{MR1486930}, \cite{MR3626900}, \cite{MR2794975}).
In that respect, our motivations to tackle the non-symmetric processes were twofold: propose a collection of general results that can lead to quite precise estimates in many interesting cases; identify a class of processes for which estimates can be treated 
in a unified manner.
Section~\ref{sec:gen},
~\ref{sec:upper_R}
and~\ref{sec:lower}
of the present paper
comprise those general results,
while
 in Section~\ref{sec:exls} we illustrate how to use them.
In Section~\ref{sec:cU}, on the other hand, we provide
estimates 
for a relatively large class of
non-symmetric
L{\'e}vy processes
in a legible and concise  form.

The
convolutional structure 
of L{\'e}vy processes
allows better understanding
and uniform results that can be beneficial in further studies.
They can, for instance, be employed to construct more complex
Markov processes. They usually serve as a starting point in that procedure, known as Levi's or parametrix method (\cite{MR3353627}, \cite{MR3652202}, \cite{MR3500272}, \cite{Kim2017}, \cite{MR2876511}, \cite{MR3550165}).
This was another motivation for the current study. The construction of  L{\'e}vy-type processes based on the results of Section~\ref{sec:cU} has already been accomplished in \cite{MR3996792} and \cite{KS-2018}.

For L{\'e}vy processes on the real line,
where the geometry is simpler,
the effects of the non-symmetry
appear to be more manageable.
For instance, in
\cite{MR4081943}, \cite{TGLLBT}, \cite{SCPK}
density estimates of certain subordinators, i.e., one-dimensional L{\'e}vy processes
supported on the positive half-line,
were established. 
In Section~\ref{sec:upper_R} 
we propose different general results
that allow handling highly non-symmetric one-dimensional L{\'e}vy processes.
The methods are inspired by \cite{MR3357585} and
\cite{MR3666815}.
The conclusions of Section~\ref{sec:upper_R}
coupled with those of 
Section~\ref{sec:lower} that 
concern one-dimensional processes
may be used to obtain estimates of
other important objects
related to the process,
like the density of its running supremum,
see the equivalences in \cite{LCJM-2019}.

We note that the lower estimates 
are delicate as  due to the non-symmetry it is
troublesome to detect the position of
the supremum of the density.
We systematically address that problem in Section~\ref{sec:lower}
in view of a recent progress made in 
\cite{TGKS-2019}.

We will now introduce our setting.
Let
$d\in\N$ and $Y=(Y_t)_{t\geq 0}$ be
a L{\'e}vy process  in $\Rd$
(\cite{MR1739520}). 
Recall that there is a well known
one-to-one
 correspondence
between L{\'e}vy processes in $\Rd$ and
the vaguely continuous convolution semigroups 
of probability measures $(P_t)_{t\geq 0}$ on $\Rd$.
The characteristic exponent $\LCh$ of $Y$ is defined by 
$$\mathbb{E}
e^{i\left<x,Y_t\right>}
=\int_{\Rd} e^{i\left<x,y\right>} P_t(dy)
=e^{-t\LCh(x)}\,,\qquad x\in\Rd\,,
$$ 
and equals
$$
\LCh(x)=
\left<x,Ax\right>
-i\left<x,\drf\right> - \int_{\Rd} \left(e^{i\left<x,z\right>}-1 - i\left<x,z\right>\ind_{|z|<1}\right)\LM(dz)\,.
$$
Here 
$A$ is a symmetric non-negative definite matrix, 
$\drf \in \Rd$ and
$\LM(dz)$ is a L{\'e}vy measure, i.e., 
a measure satisfying
$$
\LM(\{0\})=0\,,
\qquad\quad
\int_{\Rd} (1\land |z|^2)\LM(dz)<\infty \,.
$$
We call $(A,\LM,\drf)$ the generating triplet of $Y$. 
Our aim is to discuss  and establish estimates of the
density $p(t,x)$ of $Y_t$, or equivalently of $P_t(dx)$.
To this end 
for $r>0$ we define 
$$
h(r)=r^{-2} \|A\|+\int_{\Rd}\left(1\wedge\frac{|x|^2}{r^2}\right)\LM(dx)\,,
$$ 
and
$$
K(r)=r^{-2}\|A\|+r^{-2}\int_{|x|<r}|x|^2\LM(dx)\,.
$$
Note that $|e^{-t \LCh(x)}|= e^{-t {\rm Re}[\LCh(x)]}$ 
and if $e^{-t \LCh(x)}$ is absolutely integrable,
then we can invert the Fourier transform
and represent the density as
\begin{align*}
p(t,x)=(2\pi)^{-d}\int_{\R^d} e^{-i\left<x,z\right>}e^{-t\LCh (z)}\, dz\,.
\end{align*}

Let us provide the fundamental relation
between {\it the concentration function} $h$ 
and {\it the characteristic exponent} $\LCh$.
The real part of $\LCh$
equals
$
{\rm Re}[\LCh(x)]=\left<x,Ax\right>+\int_{\Rd}\big( 1-\cos \left<x,z\right> \big)\LM(dz)
$ and
we consider its radial, continuous and non-decreasing majorant
defined by
$$
\LCh^*(r)=\sup_{|z|\leq r} {\rm Re}[\LCh(z)],\qquad r>0\,.
$$
Then $h$ and $\LCh^*$ are comparable as follows
(see \cite[Lemma~4]{MR3225805}) 
\begin{align}\label{ineq:comp_TJ}
\frac{1}{8(1+2d)} h(1/r)\leq \LCh^*(r) \leq 2 h(1/r)\,,\qquad r>0\,.
\end{align}

It is a usual practice
to use  the characteristics
$(A,\LM,b)$, $h$ and $K$, $\LCh$ and $\LCh^*$,
describing a L{\'e}vy process,
in order to formulate assumptions and results
on the estimates of $p$
(see \cite{TGKS-2019}, \cite{MR3139314},~\cite{MR2995789}).
Note
that $h(0^+)<\infty$ ($h$ is bounded) if and only if $A=0$ and $\LM(\Rd)<\infty$, i.e., the corresponding L\'{e}vy process is a compound Poisson process (with drift). 
{\bf We assume in the whole paper that
$h(0^+)=\infty$}.

L{\'e}vy processes that we focus on
are such that the density $p(t,x)$ exists and
(with the
exception of
Theorem~\ref{thm:up_d1},
Lemma~\ref{lem:low_general_Rd}
and~\ref{lem:low_general_R1},
where weaker assumptions are imposed)
satisfies
$$
\sup_{x\in\Rd} p(t,x) \leq c_1 \left[h^{-1}(1/t)\right]^{-d},
$$
for a constant $c_1>0$ and all $t<T_1$, where $T_1\in (0,\infty]$ is fixed.
Such a property of a L{\'e}vy process was named  $\Ca$ and 
thoroughly investigated in
\cite[Section~3.1]{TGKS-2019}. 
In particular, several non-trivial equivalent reformulations 
called $\Cb$~--~$\Ch$,
which involve $p$, $h$ and $K$, $\LCh$ and $\LCh^*$, were given.
They all validate the Fourier inversion formula for the density.
In the literature one finds
even more  conditions of the same meaning, 
which arise
due to the relation between $h$ and $\LCh^*$ in
\eqref{ineq:comp_TJ}.
We include two of them, labelled $\Nb$ and $\Nc$, into our discussion, see Lemma~\ref{lem:eqiv_h_LCh}.
To be consistent with the existing literature,
the central assumption in all our results, with the aforementioned exceptions, becomes either 
the condition $\Cc$ or  $\Nb$ restated in
\eqref{KP_as8}.
It is noteworthy,
as we give certain importance to L{\'e}vy processes on the real line, that
in one dimension 
 $\Cc$ and
\eqref{KP_as8} lighten to a scaling
condition for the function $h$,
see \cite[Remark~3.2]{TGKS-2019}.

We emphasize  that a large portion of {\bf our results 
constitute  a toolbox}
to choose from
 according to the process under  consideration.
For upper bounds
those tools are
Theorem~\ref{thm:KP},
Proposition~\ref{prop:one_side}
and
Theorem~\ref{thm:up_d1},
which could also be supported by
\cite[Theorem~3.1]{TGKS-2019}.
For lower bounds these are
various results of
Section~\ref{sec:lower}
together with
\cite[Theorem~5.2 and~5.3]{TGKS-2019}.
For example,
if $d=1$ and a  L{\'e}vy process corresponds to a triplet $(A,\LM,\drf)$, where $A=0$, $\drf\in\RR$ and
$$
\LM(dx)=n(x)dx\,,\qquad
n(x)\approx
|x|^{-1-\alpha}
\left( e^{-c_-\,|x|}\ind_{x<0}
+
e^{- c_+\,|x|}\ind_{x>0}
\right),
$$
with
$\alpha\in(0,2)$
and $c_-,c_+\geq 0$,
we deduce 
from our results that for
every $T>0$, 
$$
p(t,x+t \drf_{[1/\LCh^{-1}(1/t)]})
\approx \left( t^{-1/\alpha} \land t n(x)\right),
$$
on $(0,T)\times \RR$,
see Example~\ref{ex:5}.
The necessary notation is 
gathered at the end of this section.
The non-symmetry of $\LM(dx)$
is responsible for the appearance of a shift in the spacial coordinate, which 
for $r>0$ is defined by
\begin{align}\label{def:br}
\drf_r=\drf+\int_{\Rd} z \left(\ind_{|z|<r} - \ind_{|z|<1}\right)\LM(dz)\,.
\end{align}
For more applications in the same spirit we refer the reader to Examples~\ref{ex:3} -- \ref{ex:4b}.

We
present yet
another sample of our  findings,
this time in $\Rd$,
which is an excerpt from Section~\ref{sec:cU}.
Related applications can be found in Examples~\ref{ex:1} -- \ref{ex:2}.

\begin{theorem}\label{thm:int1}
Let  $A=0$, $b\in\Rd$ and $\LM(dx)=\gLM(x)dx$ be a L{\'e}vy measure  
satisfying $\gLM(x)\approx g(|x|)$ for a non-increasing function $g\colon [0,\infty)\to [0,\infty]$.
Given $T \in (0,\infty]$ the following are equivalent:\\

\begin{enumerate}
\item[\it (a)]
There is $c\in [1,\infty)$ such that for all $r<T$,
$$
h(r) \leq c K(r)\,.
$$

\item[\it (b)]
There is $c>0$ such that for all $t<T$,
$$
\int_{\Rd} e^{-t\, {\rm Re}[\LCh(z)]} dz \leq c \left[h^{-1}(1/t)\right]^{-d}.
$$

\item[\it (c)]
For every $\bbbeta\in \mathbb{N}_0^d$
there is $c\in[ 1,\infty)$ such that
for all $0<t<T$, $x\in\Rd$,
\begin{align*}
| \partial_x^{\bbbeta}\, p(t,x+t\drf_{[h^{-1}(1/t)]}) | \leq c  \left[ h^{-1}(1/t) \right]^{-|\bbbeta|} \left( [h^{-1}(1/t)]^{-d}\land \frac{t K(|x|)}{|x|^{d}}\right),
\end{align*}
and
\begin{equation*}
 p(t,x+t\drf_{[h^{-1}(1/t)]})\geq c^{-1} \bigg( [ h^{-1}(1/t)]^{-d}  \land  \,t \gLM (x)\bigg).
\end{equation*}
\end{enumerate}
Furthermore,
there are  $c \in[1,\infty)$, $R\in (0,\infty]$ such that for all $|x|<R$,
$$\frac{K(|x|)}{|x|^d} \leq c\, \gLM(x)\,.$$
if and only if 
for some $\beta\in [0,2)$, $c\in(0,1]$
we have
$c\, \lambda^{d+\beta} g(\lambda r) \leq g(r)$,
$\lambda \leq 1$, $r<R$.
\end{theorem}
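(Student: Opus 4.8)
I would prove the cyclic chain $(a)\Rightarrow(b)\Rightarrow(c)\Rightarrow(a)$ and treat the concluding ``Moreover'' equivalence separately. The implications linking $(a)$ and $(b)$ are analytic and rest on the structural equivalences between real-variable conditions on $h,K$ and growth conditions on the characteristic exponent $\LCh$ established in the preceding sections; $(b)\Rightarrow(c)$ is an application of the heat-kernel estimates of Section~\ref{subsec:unim_comp}; and $(c)\Rightarrow(a)$ follows by reading off the on-diagonal behaviour of the density and invoking the equivalence, proved earlier, between that behaviour and $(a)$. I expect the main obstacle to be precisely this interface --- that the Fourier-side bound $(b)$, the real-variable condition $(a)$, and the size of $\sup_y p(t,y)$ all encode the same information about $\LCh$ --- which leans on the delicate web of equivalences among conditions on the characteristic exponent developed earlier; the genuinely new computations below are comparatively routine.

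\emph{$(a)\Leftrightarrow(b)$.} Since $A=0$ and $n(x)\approx g(|x|)$, the elementary estimate $1-\cos\langle z,y\rangle\geq\frac14\langle z,y\rangle^2$ on $\{|y|<1/|z|\}$ together with the isotropic comparison gives $\mathrm{Re}\,\LCh(z)\geq c^{-1}K(1/|z|)$ for all $z\in\Rd$, with $c=c(d)$. Under $(a)$ this upgrades to $\mathrm{Re}\,\LCh(z)\geq c^{-1}h(1/|z|)$ for $|z|>1/T$; moreover $(a)$ is equivalent, by those structural equivalences, to a weak lower scaling of $h$ near the origin. To obtain $(b)$ I would split $\int_{\Rd}e^{-t\,\mathrm{Re}\,\LCh(z)}\,dz$ into the part over $\{|z|\leq1/T\}$, a bounded term dominated by $[h^{-1}(1/t)]^{-d}$ for $t<T$, and the part over $\{|z|>1/T\}$; polar coordinates and the substitution $r=1/|z|$ turn the latter into a constant multiple of $\int_0^{T}e^{-c^{-1}t\,h(r)}r^{-d-1}\,dr$, which I would bound by a dyadic decomposition around $h^{-1}(1/t)$, using $h(r)\leq1/t$ for $r\geq h^{-1}(1/t)$ and the lower scaling of $h$ for $r<h^{-1}(1/t)$. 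For $(b)\Rightarrow(a)$: if $(a)$ fails then $h/K$ is unbounded on $(0,T)$, which by the same equivalences makes $\mathrm{Re}\,\LCh$ degenerate along suitable directions and $\int e^{-t\,\mathrm{Re}\,\LCh}$ too large, contradicting $(b)$ --- alternatively one quotes $(a)\Leftrightarrow(b)$ directly from the preceding sections.

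\emph{$(b)\Rightarrow(c)$ and $(c)\Rightarrow(a)$.} Condition $(b)$ makes $z\mapsto e^{-t\LCh(z)}$ absolutely integrable for $t<T$ (as $|e^{-t\LCh(z)}|=e^{-t\,\mathrm{Re}\,\LCh(z)}$), so $p(t,\cdot)$ exists, is smooth, and is given by Fourier inversion. Since $g$ is non-increasing, $\LM$ compares with the isotropic unimodal measure $g(|x|)\,dx$, and $(b)$ is precisely the growth assumption on $\LCh$ needed in Section~\ref{subsec:unim_comp}; hence Theorem~\ref{p:upperestonp} yields the upper bounds for $p(t,\cdot+t\drf_{[h^{-1}(1/t)]})$ and all its derivatives in the stated form, and the matching lower bound is Theorem~\ref{thm:low_space} (together with Theorem~\ref{thm:low_jump_impr}); this is $(c)$. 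Conversely, $(c)$ presupposes a smooth density, so $\LM(\Rd)=\infty$ (for a finite L\'evy measure and $A=0$ the measure $P_t$ has an atom), and since $g$ is non-increasing with $\int_{|x|\geq1}\LM(dx)<\infty$ this forces $g(0{+})=\infty$. Taking $\bbbeta=0$ and letting $x$ range over $\Rd$, the upper bound in $(c)$ gives $\sup_{y}p(t,y)\leq c\,[h^{-1}(1/t)]^{-d}$, while the lower bound at $x=0$ with $g(0{+})=\infty$ gives $\sup_{y}p(t,y)\geq c^{-1}[h^{-1}(1/t)]^{-d}$; thus $\sup_{y}p(t,y)\approx[h^{-1}(1/t)]^{-d}$ for $t<T$. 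By the equivalence, established earlier, between this behaviour of the maximum of the density and condition $(a)$, we recover $(a)$, closing the cycle.

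\emph{The ``Moreover'' equivalence.} From $n(x)\approx g(|x|)$ and polar coordinates, $K(r)/r^d\approx r^{-d-2}\int_0^r s^{d+1}g(s)\,ds$, so ``$K(|x|)/|x|^d\leq c\,n(x)$ for $|x|<R$'' is equivalent to ``$\int_0^r s^{d+1}g(s)\,ds\leq C\,r^{d+2}g(r)$ for $r<R$''. If the scaling condition holds, then $c\,\lambda^{d+\beta}g(\lambda r)\leq g(r)$ reads $g(s)\leq c^{-1}(r/s)^{d+\beta}g(r)$ for $s\leq r<R$, and integrating $s^{d+1}g(s)\leq c^{-1}g(r)r^{d+\beta}s^{1-\beta}$ over $(0,r)$ gives the integral bound, the integral converging at $0$ since $\beta<2$. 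Conversely, $g$ non-increasing yields $\int_0^r s^{d+1}g(s)\,ds\geq c_1 r^{d+2}g(r)$ with $c_1=c_1(d)>0$; combined with the assumed upper bound, the inequality $\frac{d}{dr}\log\!\int_0^r s^{d+1}g(s)\,ds\geq\frac1{Cr}$ gives $\int_0^s u^{d+1}g(u)\,du\geq(s/r)^{1/C}\int_0^r u^{d+1}g(u)\,du$ for $s\leq r<R$, whence $g(s)\,s^{d+2-1/C}\leq\frac{C}{c_1}g(r)\,r^{d+2-1/C}$, i.e.\ the scaling condition with $\beta=2-1/C$ (enlarging $C$ if needed so that $\beta\in[0,2)$).
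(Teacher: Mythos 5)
Your proposal is correct in substance and draws on the same toolbox as the paper, but it orders the cycle differently and re-derives several steps by hand. The paper proves \emph{(a)}$\Rightarrow$\emph{(c)}$\Rightarrow$\emph{(b)}$\Rightarrow$\emph{(a)}: from \emph{(a)} it gets the scaling \eqref{eq:wlsc:h} via Lemma~\ref{lem:equiv_scal_h} and $h\approx\uh$, and then \emph{(c)} from Theorem~\ref{p:upperestonp} together with Remark~\ref{rem:lower_bound_sharp}; \emph{(c)} with $\bbbeta=0$ gives $\Ca$, hence \emph{(b)} by Theorem~\ref{thm:equiv}; and \emph{(b)}$\Rightarrow$\emph{(a)} by Lemmas~\ref{lem:2_impl_scal} and~\ref{lem:equiv_scal_h}. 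You instead prove \emph{(a)}$\Rightarrow$\emph{(b)} directly — your lower bound ${\rm Re}[\LCh(z)]\gtrsim K(1/|z|)$ and the dyadic integral estimate essentially reprove Remark~\ref{rem:uni_scal_C3} and the implication $\Cc\Rightarrow\Cb$ (i.e.\ \cite[Lemma~16]{MR3165234}) in the isotropic-comparable case — then \emph{(b)}$\Rightarrow$\emph{(c)} by the same two theorems the paper uses for \emph{(a)}$\Rightarrow$\emph{(c)}, and \emph{(c)}$\Rightarrow$\emph{(a)} through the behaviour of $\sup_x p(t,x)$. Your detour showing $g(0^+)=\infty$ to obtain the matching lower bound on the supremum is correct but unnecessary: the upper bound in \emph{(c)} alone is $\Ca$, which already closes the loop via Theorem~\ref{thm:equiv} and Lemmas~\ref{lem:2_impl_scal}, \ref{lem:equiv_scal_h}. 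For the ``Moreover'' part you reprove Lemma~\ref{lem:comp_uK_uLM} directly, by the same log-derivative device used in the paper; what the paper's citation buys is only brevity, what your direct arguments buy is a self-contained treatment of the isotropic-comparable case.

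Two small points should be fixed. First, in \emph{(b)}$\Rightarrow$\emph{(c)} say explicitly that \emph{(b)} yields \eqref{eq:wlsc:h} for $\uh$ (via Lemmas~\ref{lem:2_impl_scal}, \ref{lem:equiv_scal_h} and $h\approx\uh$) — Theorem~\ref{p:upperestonp} is stated under that scaling, not under \emph{(b)} itself — and note that the estimates produced there are anchored at $x+t\drf_{[\uh^{-1}(1/t)]}$ and expressed through $\uK$ and $\uh^{-1}$, so passing to the anchor $t\drf_{[h^{-1}(1/t)]}$ and to $K$, $h^{-1}$ as required in \emph{(c)} needs Lemma~\ref{lem:small_drif} and Corollary~\ref{cor:small_shift}; this is exactly the bookkeeping the paper's proof records. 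Second, in the converse of the ``Moreover'' part the displayed inequality should read $F(s)\leq (s/r)^{1/C}F(r)$, where $F(r)=\int_0^r u^{d+1}g(u)\,du$: your own differential inequality $(\log F)'\geq 1/(Cr)$ integrates to this direction, and it is this direction that combines with $c_1 r^{d+2}g(r)\leq F(r)\leq C r^{d+2}g(r)$ to give the stated conclusion $g(s)s^{d+2-1/C}\lesssim g(r)r^{d+2-1/C}$; as written your intermediate inequality is reversed, although the final bound you draw from it is the right one.
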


The assertions (a) and  (b) of Theorem~\ref{thm:int1}
may be restated in other equivalent forms by 
\cite[Lemma~2.3, Theorem~3.1, Proposition~3.6, Corollary~3.8 and~3.9, Lemma~3.9]{TGKS-2019}.
The last part of the theorem may be reformulated using Lemma~\ref{lem:comp_uK_uLM}.
The proof of Theorem~\ref{thm:int1} is given at the end of Section~\ref{subsec:unim_comp}.

The remainder of the paper is organized as follows.
In Section~\ref{sec:gen} we refine 
\cite[Theorem~3]{MR3357585} and give further general consequences.
In Section~\ref{sec:upper_R}
we prove upper estimates in dimension $d=1$.
Section~\ref{sec:lower} is dedicated to the lower  estimates.
In Section~\ref{sec:cU} we give
upper estimates 
of the density and its derivatives
for  L{\'e}vy processes
comparable, in a sense, with an isotropic unimodal
L{\'e}vy process,
and we complement them with lower estimates.
In Section~\ref{sec:exls} we present concrete examples.
In Section~\ref{sec:uni_app} we store supplementary results
on
 isotropic unimodal L{\'e}vy 
processes.
In particular, we assemble crucial features of {\it the bound function} $\rr_t(x)$ defined in \eqref{def:bound_function}.
We end the paper with auxiliary results in Section~\ref{sec:aux}.

Now we fix
the notation.
Throughout the article
$\omega_d=2\pi^{d/2}/\Gamma(d/2)$ is the surface measure of the unit sphere in $\R^d$.
$B(x,r)$ is a ball of radius $r$ centred at $x$, and $B_r=B(0,r)$.
By $c(d,\ldots)$ we denote a generic
 positive constant that depends only on the listed parameters $d,\ldots$. 
We write $f(x)\approx g(x)$, or simply $f\approx g$,
if there is a constant $c\in [1,\infty)$ independent of $x$ such that
$c^{-1} f(x)\leq g(x)\leq c f(x)$.
As usual $a\land b=\min\{a,b\}$ and $a\vee b = \max\{a,b\}$.
For a set $\setA \subseteq \Rd$ we denote 
$\delta(\setA)=\inf \{|y|\colon y\in \setA\}$ and
${\rm diam}(\setA)=\sup\{|y-x|\colon x,y\in \setA \}$.
Borel sets in $\Rd$ will be denoted by $\mathcal{B}(\Rd)$.
A Borel measure $\nu$ on $\Rd$ is called symmetric if
$\nu(\setA)=\nu(-\setA)$ for every $\setA\in \mathcal{B}(\Rd)$.
For $s>0$ let $\LCh^{-1}(s)=\sup\{r>0\colon \LCh^*(r)=s\}$
so that $\LCh^*(\LCh^{-1}(s))=s$ and $\LCh^{-1}(\LCh^*(s))\geq s$.

\section{Upper bounds in $\Rd$}
\label{sec:gen}

In this section we consider a L{\'e}vy process $Y$ in $\Rd$
with a generating triplet $(A,\LM,\drf)$, where $A=0$.
In Theorem~\ref{thm:KP} below
we
refine
\cite[Theorem~3]{MR3357585}
by
taking into
account the results of 
\cite{TGKS-2019}
and
tracing
the dependence of constants, 
which has not been explicitly done 
in~\cite{MR3357585}.

\begin{theorem}\label{thm:KP}
Assume that $\LM(dx)$ is a L{\'e}vy measure such that 
\begin{align}\label{KP_as1}
\LM(\setA)\leq  f(\delta(\setA)) [{\rm diam}(\setA)]^{\gamma},\qquad \setA\in \mathcal{B}(\Rd),
\end{align}
where $\gamma\in [0,d]$, and $f\colon [0,\infty)\to [0,\infty]$ is non-increasing function satisfying
\begin{align}\label{KP_as2}
\int_{|y|>r} f\left(s\vee |y|-\frac{|y|}{2} \right)\LM(dy)\leq M_2 f(s) \LCh^*(1/r),\qquad s>0,\,r>0,
\end{align}
for some constant $M_2>0$. Further, assume that there are $M_5>0$
and $T\in (0,\infty]$ 
such that
\begin{align}\label{KP_as8}
\int_{\Rd} e^{-t \, {\rm Re}[\LCh(\xi)]} \,d\xi \leq M_5\, [\LCh^{-1}(1/t)]^{d},\qquad t\in(0,T).
\end{align} 
Then for all $k,m\in \mathbb{N}_0$ 
satisfying
$m\geq k>\gamma$ and every $\bbbeta \in \mathbb{N}_0^d$ such that $|\bbbeta|<m-k$
there is a constant $C=C(d,M_2,M_5,\gamma,m,k)$ such that for all $x\in\Rd$, $t\in (0,T/[16(1+2d)])$,
\begin{align*}
|\partial_x^{\bbbeta} p(t&,x +t \drf_{[1/\LCh^{-1}(1/t)]} )|\\[5pt]
&\leq C \left[\LCh^{-1}(1/t)\right]^{d+|\bbbeta|} \min\left\{1, t \left[\LCh^{-1}(1/t)\right]^{-\gamma} f(|x|/4)+\left(1+|x|\LCh^{-1}(1/t) \right)^{-k}\right\}\,.
\end{align*}
\end{theorem}
\pf
Using Lemma~\ref{lem:eqiv_h_LCh} we get from \eqref{KP_as8} that
$\int_{\Rd}|\xi|^m e^{-t \, {\rm Re}[\LCh(\xi)]}  \,d\xi \leq \tilde{M}_5\, [\LCh^{-1}(1/t)]^{d+m}$
for all $t< T/[16(1+2d)]$ 
with $\tilde{M}_5=\tilde{M}_5(d,m,M_5)$.
Thus, the remaining inequality (8) from 
\cite[Theorem~3]{MR3357585}
holds.
Now we carefully inspect  the dependence of constants in the proof of 
\cite[Theorem~3]{MR3357585}
following the notation therein.
The constant $c_1$ depends only on dimension and $M_5$. Next, the constant $c_3=c_2/(1-\gamma/n)$ and $c_2$ depends on $C_{19}$, $M_2$, $\gamma$ and the dimension. Finally, $C_{19}$ depends only  on $M_5,m,n$ and the dimension.
\qed

In 
Corollary~\ref{cor:KP_imply}
we show
that Theorem~\ref{thm:KP} leads to a very general result 
that merely 
requires \eqref{KP_as8} to hold.
It was shown in \cite[Theorem~3.1]{TGKS-2019} that the latter (see also Lemma~\ref{lem:eqiv_h_LCh}) is equivalent
to the assumption {\bf A} of \cite{MR3139314} and \cite{MR3235175}.
In this context, 
it is noteworthy that
for transition densities discussed in
\cite[Example~4.2]{MR3139314} and \cite[Example~2]{MR3235175}
the result of
Corollary~\ref{cor:KP_imply}
provides the same 
"bell-like" estimates 
like those
obtained
in
\cite{MR3139314} and \cite{MR3235175}
by
applying
theorems 
that rely on additional assumption
of
sub-exponential distribution function.
In those extreme examples, the estimates are
known to be precise 
"bell-like" estimates.
On the other hand,  
for the isotropic $\alpha$-stable process
the upper bound in Corollary~\ref{cor:KP_imply}
will not coincide with the well known sharp estimates
of its transition density.

%

\begin{corollary}\label{cor:KP_imply}
Assume that \eqref{KP_as8} holds for some $M_5>0$ and $T\in (0,\infty]$.
Then for every $\bbbeta\in \mathbb{N}_0^d$ there is a constant $C=C(d,M_5,\bbbeta)$
such that for all $x\in\Rd$, $t\in (0,T/[16(1+2d)])$,
\begin{align*}
|\partial_x^{\bbbeta} p(t&,x +t \drf_{[1/\LCh^{-1}(1/t)]} )|\\
&\leq 
C \left[\LCh^{-1}(1/t)\right]^{d+|\bbbeta|} 
\min\left\{ 1 ,\,  t h(|x|)\right\}\,.
\end{align*}
\end{corollary}
\pf
Let $f(s)=h(s)$, $s>0$. 
We use \cite[Lemma~2.1]{TGKS-2019} repeatedly.
Plainly, 
\eqref{KP_as1} holds with $\gamma=0$.
Since $f$ is decreasing $f(s\vee |y|-|y|/2)\leq f((s\vee|y|)/2)\leq f(s/2)
\leq 4 f(s)$ for $s>0$, $y\in\Rd$.
Thus \eqref{KP_as2} holds with $M_2= 2 c_d$ by \eqref{ineq:comp_TJ}.
 Here $c_d=16(1+2d)$.
Now we apply Theorem~\ref{thm:KP} with $m=|\bbbeta|+2$ and $k=2$.
If $|x|\leq h^{-1}(1/t)$, the desired inequality holds.
If
$|x|> h^{-1}(1/t)$, 
we first note that \eqref{ineq:comp_TJ_inverse} and \cite[Lemma~2.4]{TGKS-2019}
give \cite[(A1)]{TGKS-2019} and thus by \cite[Lemma~2.3]{TGKS-2019} also \cite[(A2)]{TGKS-2019} with some
$\lah=\lah(d,M_5)$, $C_h=C_h(d,M_5)$ and $\theta_h=h^{-1}(c_d/(2 T))$.
Therefore there is $c=c(d,M_5)$ such that $h^{-1}(1/(2t))\leq c h^{-1}(1/t)$
for all $t\in (0,T/c_d)$, which together with \eqref{ineq:comp_TJ_inverse} imply
\begin{align*}
\left(1+|x|\LCh^{-1}(1/t) \right)^{-2}\leq 
\left(\frac{h^{-1}(1/(2t))}{|x|}\right)^2
\leq c^2 \left(\frac{h^{-1}(1/t)}{|x|}\right)^2\,.
\end{align*}
Next, recall that
$\lambda^2\leq h(r)/h(\lambda r)$, $\lambda \leq 1$, $r>0$.
Taking $\lambda= h^{-1}(1/t)/|x|$ and $r=|x|$ we get
$$
\left(\frac{h^{-1}(1/t)}{|x|}\right)^2 \leq t h(|x|)\,,
$$
for all $t\in (0,T/c_d)$. Since $f(|x|/4)\leq 4^2 h(|x|)$, we can bound the minimum by $th(|x|)$. This ends the proof.
\qed

\section{Upper bounds in $\RR$}
\label{sec:upper_R}

Let $Y$ be a L{\'e}vy process in $\RR$ corresponding to a L{\'e}vy triplet
$(A,\LM,\drf)$.
The aim of this section is to discuss processes with L{\'e}vy measures that exhibit entirely different behaviour
on the positive and on the negative half-line
so that they cannot be treated well by Theorem~\ref{thm:KP}.

The first result is motivated by \cite[Theorem~1]{MR3357585}.

\begin{proposition}\label{prop:one_side}
Assume that $\LM(dx)$ is a L{\'e}vy measure such that 
\begin{align*}
\LM(\setA)&\leq  f_+(\delta(\setA)) [{\rm diam}(\setA)]^{\gamma_+},
\qquad \setA\in \mathcal{B}((0,\infty)),
\end{align*}
where $\gamma_+\in [0,1]$
and $f_+\colon (0,\infty)\to (0,\infty)$ is a non-increasing function
satisfying
\begin{align*}
f_+(x/2)\leq c_{f_+} f_+(x)\,,\qquad x > 0\,,
\end{align*}
for some constant $c_{f_+} \geq 1$. 
Further, assume that 
\eqref{KP_as8}
holds for some $M_5>0$
and $T\in (0,\infty]$.
Then
there are absolute non-negative constants
$c_1$, $c_2$
and a constant  $C=C(M_5,c_{f_+}, \gamma_+)$,
such that for all $t\in (0,T/48)$
and $x > 0$,
\begin{align*}
p(t&,x +t \drf_{[1/\LCh^{-1}(1/t)]} )\\
&\leq 
C \left[ \LCh^{-1}(1/t)\right]
\min\left\{1, t \left[\LCh^{-1}(1/t)\right]^{-\gamma_+} f_+(x)+ e^{-c_1 \LCh^{-1}(1/t)x \log(1+c_2\LCh^{-1}(1/t)x)} \right\}\,.
\end{align*}
\end{proposition}
\pf
Let $\LM_r(dx)=\LM|_{B_r^c}(dx)$. We first prove that
for all $\setA \in \mathcal{B}((0,\infty))$
 and $n\in\N$,
\begin{align}\label{ineq:tranc_conv}
\LM_r^{n*} (\setA) \leq n\,c_{f_+}^{n-1}\,   
[h(r)]^{n-1}  f_+(\delta(\setA)) [{\rm diam}(\setA)]^{\gamma_+} \,.
\end{align}
The inequality holds for $n=1$.
We proceed by induction.
Let $D=(\delta(\setA)/2,\infty)$, then
\begin{align*}
\LM_r^{(n+1)*}(\setA)
&=
\int_{\RR} \LM_r((\setA-y)\cap D)\, \LM_r^{n*}(dy)
 + \int_{\RR} 
\LM_r((\setA-y)\cap D^c)\, \LM_r^{n*}(dy) =: I_1+I_2\,.
\end{align*}
By the monotonicity of $f_+$ and our assumption we get
$I_1\leq  f_+(\delta(\setA)/2) [{\rm diam}(\setA)]^{\gamma_+} \LM_r^{n*}(\RR)$.
Next, since the convolution is commutative,
\begin{align*}
I_2= \int_{\RR} \LM_r^{n*}(\setA-z) \,\LM_r(dz \cap D^c)
\leq 
n\, c_{f_+}^{n-1}\,  [h(r)]^{n-1} f_+(\delta(\setA)/2) [{\rm diam}(\setA)]^{\gamma_+}\, \LM_r(\RR)\,.
\end{align*}
Finally, note that $\LM_r^{n*}(\RR)\leq [h(r)]^n$. 
This gives \eqref{ineq:tranc_conv}, which with \eqref{ineq:comp_TJ} further imply
\begin{align}\label{ineq:tranc_conv_ball}
\LM_r^{n*}(B(x,\rho))\leq 
n\, 24^{n-1}\, c_{f_+}^{n}     [\LCh^*(1/r)]^{n-1}  f_+(x)  (2\rho)^{\gamma_+}
\end{align}
for all $x>0$ and $0<\rho<x/2$.
Now, Lemma~\ref{lem:eqiv_h_LCh}
shows that \eqref{KP_as8} implies
\cite[(3)]{MR3357585}
with $M_3=M_3(d,M_5)$ for all
$t<T/48$.
Therefore the statement of 
\cite[Lemma~8]{MR3357585}
holds true for all $x\in\Rd$ and $t<T/48$,
and consequently our result has the same proof as
\cite[Theorem~1]{MR3357585},
only \cite[Corollary~10]{MR3357585}
has to be replaced by \eqref{ineq:tranc_conv_ball}.
\qed

The statement of Proposition~\ref{prop:one_side} 
provides estimates of the transition density for arguments
to the right of
$t \drf_{[1/\LCh^{-1}(1/t)]}$.
Similarly, given the knowledge of the L{\'e}vy measure on the negative half-line 
leads to estimates for arguments to the left of $t \drf_{[1/\LCh^{-1}(1/t)]}$.
Such result follows from
Proposition~\ref{prop:one_side} applied 
to a L{\'e}vy process generated by $(A,\tilde{\LM},\tilde{\drf})$, where $\tilde{\drf}=-\drf$
and $\tilde{\LM}(dx)=\LM(-dx)$ is a reflected measure.

\begin{corollary}\label{cor:one_side}
Assume that $\LM(dx)$ is a L{\'e}vy measure such that 
\begin{align*}
\LM(-\setA)&\leq  f_-(\delta(\setA)) [{\rm diam}(\setA)]^{\gamma_-},
\qquad \setA \in \mathcal{B}((0,\infty)),
\end{align*}
where  $\gamma_-\in [0,1]$
and $f_-\colon (0,\infty)\to (0,\infty)$ is a non-increasing function
satisfying
\begin{align*}
f_-(x/2)\leq c_{f_-} f_-(x)\,,\qquad x>0\,,
\end{align*}
for some constant $c_{f_-} \geq 1$.
Further, assume that 
\eqref{KP_as8}
holds for some $M_5>0$
and $T\in (0,\infty]$.
Then for all $t\in (0,T/48)$
and $x > 0$,
\begin{align*}
 p(t&,-x +t \drf_{[1/\LCh^{-1}(1/t)]} )\\
&\leq 
C \left[ \LCh^{-1}(1/t)\right]
\min\left\{1, t \left[\LCh^{-1}(1/t)\right]^{-\gamma_-} f_-(x)+ e^{-c_1 \LCh^{-1}(1/t)x \log(1+c_2\LCh^{-1}(1/t)x)} \right\}\,,
\end{align*}
where $c_1$, $c_2$ and $C=C(M_5,c_{f_-}, \gamma_-)$ are taken from Proposition~\ref{prop:one_side}.
\end{corollary}

A function $f$ in Proposition~\ref{prop:one_side} or Corollary~\ref{cor:one_side} cannot have exponential decay,
which is due to the condition $f(x/2)\le c f(x)$.
Here we propose a highly non-symmetric one-dimensional version of \cite[Theorem~1]{MR3666815}
to capture the decay of the transition density $p(t,x)$
for large $x$ for such measures.
Note also that
the condition
\eqref{KP_as8}, 
assumed in the results above,
implies ${\rm Re}[\LCh(x)]\geq \underline{c}\, |x|^{\underline{\alpha}}$ for some $\underline{c},\underline{\alpha} >0$ and large $|x|$,
see Lemma~\ref{lem:eqiv_h_LCh}.
We use the latter instead of \eqref{KP_as8} in our next result.

\begin{theorem}\label{thm:up_d1}
Assume that there are $\underline{c},\underline{\alpha}>0$ and $x_0\geq 1$ such that
\begin{align*}
{\rm Re}[\LCh(x)]\geq \underline{c}\, |x|^{\underline{\alpha}}\,,\qquad |x|> x_0\,.
\end{align*}
Suppose $\LM(dx)$ is  a L{\'e}vy measure
 that for some $r_0, c_0\geq 1$ satisfies 
$$\LM(dx)\leq f_+(x)dx\,,\qquad x>r_0\,,$$
\begin{align*}
\int_{r_0}^{x-r_0} f_+(x-z) f_+(z)\,dz\leq  c_0\, f_+(x)\,,\qquad  x>2 r_0\,,
\end{align*}
for a non-increasing function
$f_+\colon (0,\infty)\to (0,\infty)$.
Given $t_0>0$ there exists a constant $c>0$ such that
\begin{align*}
p(t,x+t \drf) \leq c t f_+(x)\,,
\end{align*}
for all $t\in (0,t_0]$ and
$x\geq 4 t_0 (v_1+ v_2 +2A)+4\delta$, where
$$
v_1=\int_{[1,r_0]} z \LM(dz)\,,\qquad v_2=\int_{(-1,r_0]}z^2\LM(dz)\,,\qquad 
\delta=4(r_0+1)(1+1/\underline{\alpha})\,.
$$
\end{theorem}

\begin{remark}\label{rem:x_neg1}
Obviously, Theorem~\ref{thm:up_d1}
has a version for $x<0$, just like
Corollary~\ref{cor:one_side}
is such for Proposition~\ref{prop:one_side},
and it is obtained
from Theorem~\ref{thm:up_d1} by considering
$(A,\tilde{\LM},\tilde{\drf})$, where $\tilde{\drf}=-\drf$
and $\tilde{\LM}(dx)=\LM(-dx)$.
\end{remark}

Before we give the actual proof of Theorem~\ref{thm:up_d1}
we establish a few auxiliary results.

\begin{lemma}\label{lem:f_aux}
Assume that $f\colon (0,\infty)\to (0,\infty)$
is a non-increasing function.

\noindent
a) If for some $c_0>0$ and $r_0\geq 1$,
\begin{align*}
\int_{r_0}^{x-r_0} f(x-z) f(z)\,dz\leq  c_0 f(x)\,,\qquad  x>2 r_0\,,
\end{align*}
then there is $c_a\geq 1$ such that 
\begin{align}\label{ineq:f_shift}
f(s-r_0)\leq c_a f(s)\,, \quad s\geq 2 r_0\,.
\end{align}

\noindent
b) If \eqref{ineq:f_shift} holds for some $c_a,r_0 \geq 1$, then
given $c_1,c_2>0$ there exists $c_b$ such that
$$\exp\left(- c_1 s \ln\left(c_2 s\right)  \right)\leq c_b f(sr_0)\,, \quad c_2 s\geq 1\,.$$
\end{lemma}
\pf
For $s\in [2r_0, 3r_0)$ we have $f(s)/f(s-r_0)\geq f(3r_0)/f(r_0)$. For $s\geq 3r_0$ we have
$$
c_0 f(s)\geq \int_{s-2r_0}^{s-r_0} f(s-z)f(z)\,dz
\geq f(s-r_0)f(2r_0)r_0\,.
$$
The first part of the statement follows with $c_a=\max\{1,\frac{c_0}{f(2r_0)r_0},\frac{f(r_0)}{f(3r_0)}\}$.
Now we prove the second part. Set
$$
n_0=\inf\{n\in\N\colon   \left(c_a^{-1/c_1} c_2 n \right)^{-c_1 n} \leq f(r_0)  \,,\quad  c_2 n  \geq c_a^{1/c_1}  \}\,.
$$
Then for $s\geq n_0$ we let $n\geq n_0$ such that $s\in [n,n+1)$, and we get
\begin{align*}
\exp\left(-c_1 s \ln(c_2 s) \right)
\leq \exp\left(-c_1 n \ln(c_2 n) \right)
\leq c_a^{-n} f(r_0) \leq f(r_0+nr_0)\leq f(sr_0)\,.
\end{align*}
Including $s\in [c_2^{-1},n_0]$ 
we have that the inequality holds with
$c_b=\max\{1, \frac{1}{f(n_0r_0)}\}$.
\qed

In what follows we want to derive some consequences of  \cite[Lemma~1]{MR2794975}. 
Due to a slight inaccuracy in the proof of this result, we will first provide full proof of it in dimension $d=1$.
In Lemma~\ref{lem:comp_sup}
we denote by $Z^0=(Z_t^0)_{t\geq 0}$ 
a L{\'e}vy process that
corresponds to $(A,\LM_0,0)$, where
$\LM_0$ is a L{\'e}vy measure such that
$\supp \LM_0 \subseteq [-r_0,r_0]$, $r_0>0$.
\begin{lemma}\label{lem:comp_sup}
For every $a \geq 2(\max\{0,\xi_0\}+(2A+M_0)/e)$ we have
\begin{align*}
\PP(Z_1^0 \geq a)\leq  \exp\left( -\frac{a}{2(r_0+1)} \ln\frac{a}{2(2A+M_0)}\right),
\end{align*}
where $\xi_0=\int_{|z|\geq 1} z \LM_0(dz)$ and $M_0=\int_{\RR} z^2\LM_0(dz)$.
\end{lemma}
\pf
For $\varepsilon>0$ let $\LM_{\varepsilon}(dz)=\LM_0(dz)+\varepsilon \delta_{\{r_0/(r_0+1) \}}(dz)$.
We consider $Z^{\varepsilon}=(Z_t^{\varepsilon})_{t\geq 0}$ that corresponds to $(A,\LM_{\varepsilon},0)$.
Following \cite[Lemma~26.4 and~26.5]{MR1739520} let
$$\psi(u)= 2A u + \int_{\RR} z \left(e^{uz} - \ind_{|z|< 1}\right) \LM_{\varepsilon}(dz)\,,\qquad u> 0\,.$$
Note that $\psi$ is increasing,
$\xi_0=\lim_{u\to 0^+} \psi(u)$
and
$\lim_{u\to\infty}\psi(u)=\infty$.
Therefore, by
\cite[Lemma~26.4]{MR1739520}
for $a>\xi_0$ we have
\begin{align*}
\PP(Z_1^{\varepsilon} \geq a)\leq \exp\left(-\int_{\xi_0}^a \theta (\xi)\,d\xi\right),
\end{align*}
where 
$\theta(\xi)$ on $(\xi_0,\infty)$ is an inverse function of $\psi(u)$ on $(0,\infty)$.
For $\xi> \xi_0$,
\begin{align*}
\xi= \psi(\theta(\xi))
&= 2A\theta(\xi)
+\int_{\RR}
z \left(e^{\theta(\xi) z} - \ind_{|z|< 1}\right) \LM_{\varepsilon}(dz)\\
&=
2A\theta(\xi)
+\int_{\RR}
z \left(e^{\theta(\xi) z} - 1\right) \LM_{\varepsilon}(dz)
+\xi_0\\
&\leq 2A\theta(\xi)+e^{\theta(\xi)r_0}\theta(\xi)\int_{\RR} z^2\LM_{\varepsilon}(dz)+\xi_0
\leq  \frac{1}{e}(2A+M_{\varepsilon})e^{\theta(\xi)(r_0+1)}+\xi_0\,,
\end{align*}
where $M_{\varepsilon}=\int_{\RR} z^2\LM_{\varepsilon}(dz)$.
Thus
$$
\theta(\xi)\geq  \frac{1}{r_0+1} \ln \left( e\, \frac{\xi-\xi_0}{2A+M_{\varepsilon}} \right).
$$
Consequently, for $a>  \xi_0$,
\begin{align*}
\PP(Z_1^{\varepsilon}\geq a)\leq \exp\left[-\frac{1}{r_0+1}\int_{\xi_0}^{a}\ln\left( e\, \frac{\xi-\xi_0}{2A+M_{\varepsilon}}\right)d\xi \right]
=\exp\left[-\frac{a-\xi_0}{r_0+1} \ln\left( \frac{a-\xi_0}{2A+M_{\varepsilon}} \right) \right].
\end{align*}
Note that $Z_1^{\varepsilon}$ has the same distribution as $Z_1^0+S$, where $S\geq 0$. 
Hence
\begin{align*}
\PP(Z_1^0 \geq a)\leq \PP(Z_1^{\varepsilon}\geq a)\leq
\exp\left[-\frac{a-\xi_0}{r_0+1} \ln\left( \frac{a-\xi_0}{2A+M_{\varepsilon}} \right) \right].
\end{align*}
Passing with $\varepsilon$ to zero this yields
\begin{align*}
\PP(Z_1^0 \geq a)\leq
\exp\left[-\frac{a-\xi_0}{r_0+1} \ln\left( \frac{a-\xi_0}{2A+M_0} \right) \right],\qquad  a>\xi_0\,.
\end{align*}
Now, let $a\geq 2(\max\{0,\xi_0\}+(2A+M_0)/e)$.
Since
$a>\xi_0$ and
the function $s\ln(s)$ is increasing on $[1/e,\infty)$, we obtain the desired inequality by $a-\xi_0 \geq a/2 \geq (2A+M_0)/e$.
\qed

In Corollary~\ref{lem:tail}, \ref{cor:tail} and~\ref{cor:upper}
we denote by $Z^1=(Z_t^1)_{t\geq 0}$ 
a L{\'e}vy process that
corresponds to $(A,\LM_1,0)$, where
$\LM_1$ is a L{\'e}vy measure such that
$\supp \LM_1 \subseteq (-\infty,r_0]$, $r_0>0$.
The result is motivated by \cite[Theorem~26.8]{MR1739520}.
\begin{corollary}\label{lem:tail}
For every $a\geq 2 (\xi_1+\xi_2+2A)$ we have
\begin{align*}
\PP(Z_1^1 \geq a)\leq  \exp\left( -\frac{a}{2(r_0+1)} \ln\frac{a}{2(\xi_2+2A)}\right),
\end{align*}
where $\xi_1=\int_{[1,\infty)} z \LM_1(dz)$ and $\xi_2=\int_{(-1,\infty)}z^2\LM_1(dz)$.
\end{corollary}
\pf
It suffices to notice that $Z_1^1$ has the same distribution as $Z_1^0+S$, where $S\leq 0$
and $(Z_t^0)_{t\geq 0}$ corresponds to $(A,\LM_0,0)$ with $\LM_0(dz)=\LM_1|_{\{x> - (1\land r_0)\}}(dz)$.
Then use $\PP(Z_1^1 \geq a)\leq \PP(Z_1^0 \geq a)$ and apply Lemma~\ref{lem:comp_sup}. Clearly, $\xi_2+2A>0$ since we assume $h(0^+)=\infty$.
\qed

\begin{corollary}\label{cor:tail}
Let $\delta \geq 0$.
For every $a\geq 2 (\xi_1+\xi_2 +2A+\delta)$ we have
\begin{align*}
\PP(Z_1^1 \geq a)\leq 
\left(\frac{2(\xi_2+2A)}{a} \right)^{\delta/(2r_0+2)}
 \exp\left( -\frac{a}{4(r_0+1)} \ln\frac{a}{2(\xi_2+2A)}\right)  .
\end{align*}
\end{corollary}

The proof of the next result is the same as that of \cite[Lemma~2]{MR2794975}.

\begin{corollary}\label{cor:upper}
Assume that $Z_1^1$ has a density $p(x)$ such that
\begin{align*}
\sup_{x\in\RR} p(x)\leq m_0\,, \qquad \sup_{x\in\RR} |p'(x)| \leq m_1.
\end{align*}
For $x \geq \max\{4(\xi_1+\xi_2 +2A+\delta) , \frac{m_0}{2m_1}\}$ we have
\begin{align*}
p(x)\leq (2m_1)^{1/2}
\left(\frac{4(\xi_2+2A)}{x} \right)^{\delta/(4r_0+4)}
 \exp\left( -\frac{x}{16(r_0+1)} \ln\frac{x}{4(\xi_2 +2A)}\right).
\end{align*}
\end{corollary}

\noindent
{\bf Proof of Theorem~\ref{thm:up_d1}}
First we decompose the L{\'e}vy process $Y$.
Consider 
\begin{align*}
\LM_{1}(dx):=\LM(dx)-\LM_{2}(dx),\qquad  
\LM_{2}(dx):=  \LM|_{\{x> r_0\}}(dx)\,.
\end{align*}
We let $Z^{1}$ and $Z^{2}$  be L{\'e}vy processes corresponding to
$(A,N_{1},\drf)$
and $(0,N_{2},0)$, respectively.
Similarly, we write $\LCh_{1}$
and $\LCh_{2}$
for their characteristic exponents.
The process $Z^{2}$ is a compound Poisson process
and we denote the distribution of $Z^2_t$ by $P_t^{2}$.
Note that 
$$P^2_t(dx)=e^{-t\LM_2(\RR)}\sum_{k=0}^{\infty} \frac{t^k}{k!} \LM_2^{*k}(dx)
=e^{-t\LM_2(\RR)} \delta_{\{0\}}(dx)+ e^{-t\LM_2(\RR)} p_2(t,x)dx\,,$$
and
\begin{align*}
p_2(t,x) dx \leq e^{c_0 t_0} \,t  f_+(x)\ind_{x>r_0} dx\,.
\end{align*}
Indeed, the latter follows easily by induction: 
since $\LM_2^{*k}(dx)$ is absolutely continuous and supported on $(k\, r_0,\infty)$ we write $\LM_2^{*k}(dx)=\LM_2^{*k}(x)dx$, and  we have
$\LM_2^{*(k+1)}(x)=0$ for $x\in (r_0,2r_0]$. For $x> 2r_0$ we get
\begin{align*}
\LM_2^{*(k+1)}(x)=\int_{r_0}^{x-r_0} \LM_2(x-z)\LM_2^{*k}(z)\,dz
\leq c_0^{k-1} \int_{r_0}^{x-r_0} f_+(x-z) f_+(z)\,dz\leq c_0^k f_+(x)\,.
\end{align*}
Further, observe that
\begin{align*}
{\rm Re}[\LCh_{1}(x)] \geq (\underline{c}/2) |x|^{\underline{\alpha}}\,,\qquad |x|>x_1:=\max\left\{x_0,
\left((4/\underline{c})\LM(r_0,\infty)\right)^{1/\underline{\alpha}}\right\}\,.
\end{align*}
Let
\begin{align*}
m_0=2 \int_{0}^{\infty}  e^{-t\, {\rm Re}[\LCh_1(z)]}\, dz\,,
\qquad
m_1= 2 \int_{0}^{\infty} (1\vee |z|) e^{-t\, {\rm Re}[\LCh_1(z)]}\, dz\,,
\end{align*}
and note that
$m_0/ m_1 \leq 1$.
In what follows a constant $c$ may change from line to line.
We  have
\begin{align*}
m_1
\leq 2x_1^2+2 \int_{x_1}^{\infty} |z| e^{- t (\underline{c}/2) |z|^{\underline{\alpha}}}\,dz 
\leq t^{-2/\underline{\alpha}} \left(2 x_1^2 t_0^{2/\underline{\alpha}} +2\int_0^{\infty} z e^{-(\underline{c}/2)|z|^{\underline{\alpha}}} \,dz \right)
= c t^{-2/\underline{\alpha}}.
\end{align*}
By
Corollary~\ref{cor:upper}
and Lemma~\ref{lem:f_aux}
the density $p_1(t,x)$ of $Z^1_t$ satisfies
\begin{align*}
p_1(t,x+t\drf)&
\leq  c t^{-1/\underline{\alpha}}
\left(\frac{4t(v_2+2A)}{x} \right)^{\delta/(4r_0+4)}
 \exp\left( -\frac{x}{16(r_0+1)} \ln\frac{x}{4t(v_2 +2A)}\right)
\\
&\leq 
 c
t^{-1/\underline{\alpha}+\delta/(4r_0+4)}\exp
\left( -\frac{x}{16(r_0+1)} \ln\frac{x}{4t_0(v_2+2A)}\right)
\leq c t f_+(x)\,,
\end{align*}
for all $t\in (0,t_0]$
and $x\geq r_1:=4t_0(v_1+ v_2 +2A)+4\delta$.
Therefore we have
\begin{align*}
p(t,x+t\drf)&=\int_{\RR}p_1(t,x-z+t\drf)P_t^2(dz)\\
&\leq p_1(t,x+t\drf)+  \int_{\RR} p_1(t,x-z+t\drf) p_2(t,z)\,dz\\
&\leq c t f_+(x)+ c t \int_{r_0}^{\infty}p_1(t,x-z+t\drf) f_+(z)\,dz\,.
\end{align*}
Now, if $x\in [r_1,r_1+r_0]$, then
\begin{align*}
\int_{r_0}^{\infty}p_1(t,x-z+t\drf) f_+(z)\,dz
\leq \frac{ f_+(r_0)}{f_+(r_1+r_0)}\, f_+(x)\,.
\end{align*}
If $x>r_1+r_0$, we have (note that $r_1>r_0$)
\begin{align*}
\int_{r_0}^{\infty}p_1(t,x-z+t\drf) f_+(z)\,dz
&=\int_{r_0}^{x-r_1} p_1(t,x-z+t\drf) f_+(z)\,dz
+\int_{x-r_1}^{\infty} p_1(t,x-z+tb) f_+(z)  \,dz\\
&\leq c t_0  \int_{r_0}^{x-r_1} f_+(x-z) f_+(z)\,dz
+  f_+(x-r_1)\,.
\end{align*}
Applying Lemma~\ref{lem:f_aux} $k$ times, where $k$ satisfies
$ k r_0 \geq r_1$, we obtain the final result.
\qed

\section{Lower bounds}
\label{sec:lower}

We consider a L{\'e}vy process $Y$ in $\Rd$ with a generating triplet $(A,\LM,\drf)$.
We begin with a general observation.
For necessary and sufficient conditions for the existence of the transition density see
\cite{MR182061}. We also refer the reader to \cite{MR3010850}.

\begin{lemma}\label{lem:low_general_Rd}
Assume that a L{\'e}vy measure $\LM(dx)$  is such that
for some $r_0\geq 1$, 
$$\LM(dx) \geq f(|x|)dx \,,\qquad |x|>r_0\,,$$
where 
$f\colon (0,\infty)\to [0,\infty)$
is a non-increasing function.  
There is a constant $c=c(d)$ such that
given $t_0>0$ there exists a constant $\tilde{c}>0$
so that 
$$
p(t,x+t\drf)\geq  \tilde{c}\, tf(|x|+r_1)\,,
$$
holds for each $t\in (0,t_0]$ 
such that
the transition density of $Y_t$ exists,
and  almost every $|x|\geq r_0+r_1$, where $r_1=h^{-1}(1/(2ct_0))$.
\end{lemma}

\pf
We decompose the L{\'e}vy process $Y$.
Consider 
\begin{align*}
\LM_{1}(dx):=\LM(dx)-\LM_{2}(dx),\qquad  
\LM_{2}(dx):= \LM|_{B_1^c}(dx)\,.
\end{align*}
We let $Z^{1}$ and $Z^{2}$  be L{\'e}vy processes corresponding to
$(A,\LM_{1},\drf)$
and $(0,\LM_{2},0)$, respectively.
The process $Z^{2}$ is a compound Poisson process
and we denote the distribution of $Z^2_t$ by $P_t^{2}$.
Note that distribution of $Z^1_t$ is absolutely continuous with a density $p_1(t,x)$ and
by
\cite{MR632968}
there is $c=c(d)$ such that for $r\geq 1$,
$$
\PP(|Z^1_t-t \drf |\geq r)\leq
 c t  \left( r^{-1} \left| \int_{\RR} z \left(\ind_{|z|<r} - \ind_{|z|<1}\right)\LM_1(dz)\right|  +h(r)\right)
= ct  h(r)\,.
$$
Therefore, by \cite[Lemma~2.1]{TGKS-2019} there is $r_1 \geq 1$
for which $\PP(|Z^1_t-t \drf |\geq r_1)\leq c t_0 h(r_1)=1/2$.
Then for $|x|\geq r_1+r_0$,
\begin{align*}
p(t,x+t \drf)
&= \int_{\RR} p_{1}(t,x+t \drf-z) P_t^{2}(dz)\\
&\geq  \int_{|x-z|<r_1} p_{1}(t,x+t \drf-z)\, e^{-t \LM_{2}(\RR)}\, t \LM_2(dz)\\
& \geq \int_{|z|< r_1}  p_{1}(t,z+t \drf)\, e^{-t \LM_{2}(\RR)}\, t f(|x-z|) \,dz\\
& \geq \left(\int_{|z|<r_1}  p_{1}(t,z+t \drf)dz\right)
  e^{-t_0 \LM(B_1^c) } \, t f(|x|+r_1)\\
&\geq   \PP(|Z^1_t-t \drf |< r_1)\,  e^{-t_0\LM(B_1^c) } \, t f(|x|+r_1)\geq \frac{e^{-t_0\LM(B_1^c) }}{2} \, t f(|x|+r_1)  \,.
\end{align*}
The result follows by using the shift property of $f$.
\qed

Only by a mild modification of the proof of Lemma~\ref{lem:low_general_Rd}, 
in one dimension one obtains a one-sided version of that observation.

\begin{lemma}\label{lem:low_general_R1}
Let $d=1$. Assume that a L{\'e}vy measure $\LM(dx)$  is such that
for some $r_0\geq 1$, 
$$\LM(dx) \geq f_+(x)dx \,,\qquad x>r_0\,,$$
where 
$f_+\colon (0,\infty)\to [0,\infty)$
is a non-increasing function.
There is an absolute constant $c$ such that
given $t_0>0$ there exists a constant $\tilde{c}>0$
so that 
$$
p(t,x+t\drf)\geq  \tilde{c}\, tf_+(x+r_1)\,,
$$
holds for each $t\in (0,t_0]$ 
such that
the transition density of $Y_t$ exists,
and almost every $x\geq r_0+r_1$, where $h^{-1}(1/(2ct_0))$.
\end{lemma}

\begin{remark}\label{rem:f_shift}
If 
the function $f$ 
in
Lemma~\ref{lem:low_general_Rd}
satisfies
\eqref{ineq:f_shift}
for some $c_0\geq 1$,
then $f(|x|+r_1)$ may be replaced by  $f(|x|)$.
Similarly,
in Lemma~\ref{lem:low_general_R1}
we would get $f_+(x)$.
\end{remark}

We will now see how the statement of Lemma~\ref{lem:low_general_Rd} and~\ref{lem:low_general_R1} may be improved when further assumptions on the process $Y$
are imposed.
The main ingredient of the proof is \cite[Lemma~3.7]{TGKS-2019}.

\begin{proposition}\label{prop:low_C3_Rd}
Assume that $\Cc$ holds and
a L{\'e}vy measure $\LM(dx)$ satisfies
$$
\LM(dx)\geq f(|x|)dx\,,
$$
where 
$f \colon (0,\infty)\to [0,\infty)$
is a non-increasing function.
Then for all $T,\theta>0$
there is a constant $\tilde{c}=\tilde{c}(d,\alpha_3,c_3,T_3,T,h,\theta)\in (0,1]$ 
such that for every $0<t<T$
there exists $|y_t|\leq (1/\tilde{c})\, h^{-1}(1/t)$
so that for all $|x|\geq \theta h^{-1}(1/t)$,
\begin{align*}
p(t,x+y_t+t\drf_{[h^{-1}(1/t)]}) \geq \tilde{c} \,t f(|x|)\,.
\end{align*}
The statement holds with 
$T=1/h(T_3/c)$, where $c=c(d,\alpha_3,c_3)$,
and $\tilde{c}=\tilde{c}(d,\alpha_3,c_3, \theta)$.
\end{proposition}

\pf 
{\it Step 1.}
We decompose the process $Y$.
For $\lambda>0$ consider 
$$
\LM_{1.\lambda}(dx):= \LM(dx)-\LM_{2.\lambda}(dx)\,,\qquad  
\LM_{2.\lambda}(dx):=\frac12 
\LM|_{B_{\lambda}^c}(dx)\,,
$$
and
$$
\xi_{\lambda}:=\int_{\Rd}z \left(\ind_{|z|<\lambda} - \ind_{|z|<1}\right)\LM_{2.\lambda}(dz)
=- \int_{|z|<1}z  \,\LM_{2.\lambda}(dz)\,.
$$
Let $Z^{1.\lambda}$ and $Z^{2.\lambda}$  be L{\'e}vy processes corresponding to
$(A,N_{1.\lambda},\drf+ \xi_{\lambda})$
and $(0,N_{2.\lambda},-\xi_{\lambda})$.
We write $\LCh_{1.\lambda}$, $\LCh_{2.\lambda}$
for their characteristic exponents;
$h_{1.\lambda}$, $h_{2.\lambda}$
for  their concentration functions
and
$\drf^{1.\lambda}_r$, $b^{2.\lambda}_r$
for
quantities defined in
\eqref{def:br}.
We collect further properties:\\

\begin{itemize}
\item[(i)]  $Z_t^{2.\lambda}$ is a compound Poisson process,
we denote its the distribution by $P_t^{2.\lambda}(dz)$,
$$\LCh_{2.\lambda}(x)=-\int_{\Rd} \left(e^{i\left<x,z\right>}-1\right)\!N_{2.\lambda}(dz)\,.$$

\item[(ii)] $Z^{1.\lambda}$
satisfies the condition $\Cc$
of \cite{TGKS-2019} with $\alpha_3$, $T_3$ and $c_3/2$, since
for every $\lambda>0$,
$$
\frac12 \LM(dx)\leq N_{1.\lambda}(dx)\leq \LM(dx)\,.
$$

\item[(iii)]
Due to $\LCh=\LCh_{1.\lambda}+\LCh_{2.\lambda}$ we have for $t>0$, $x\in\Rd$,
\begin{align*}
p(t,x)=\int_{\Rd} p_{1.\lambda}(t,x-z)P_t^{2.\lambda}(dz)\,.
\end{align*}

\item[(iv)]
For all $r,u>0$,
$$
\frac12 h(r)\leq h_{1.\lambda}(r)\leq h(r)
\quad \mbox{and}\quad 
h^{-1}(2u) \leq h_{1.\lambda}^{-1}(u)\leq h^{-1}(u)\,.
$$
Thus, $|x|\geq \theta h^{-1}(1/t)$
implies $|x|\geq \theta h_{1.\lambda}^{-1}(1/t)$,
while $t<1/h(T_3/c)$ gives $t<1/h_{1.\lambda}(T_3/c)$.\\

\item[(v)]
By the definition in the first equality,
$$
\drf^{1.\lambda}_{\lambda}=\drf+\xi_{\lambda}
+\int_{\Rd} z \left(\ind_{|z|<\lambda} - \ind_{|z|<1}\right)N_{1.\lambda}(dz)
=\drf_{\lambda}\,.
$$
Then by \cite[(8)]{TGKS-2019} and \cite[Lemma~2.1]{TGKS-2019} for all $\varepsilon\in (0,1]$ and $r>0$  satisfying $\lambda \leq \varepsilon \, r $,
\begin{align*}
|\drf^{1.\lambda}_{\lambda/ \varepsilon}-\drf_{r}|
&\leq 
|\drf^{1.\lambda}_{\lambda/\varepsilon}-\drf^{1.\lambda}_{\lambda}|
+ |\drf_{\lambda}-\drf_{r}|\\
&\leq (\lambda/\varepsilon)\,h_{1.\lambda}(\lambda)
+ r h(\lambda)
\leq 2 r h(\lambda)
\leq 4 r  \varepsilon^{-2} h_{1.\lambda}(\lambda/\varepsilon)\,.
\end{align*}

\item[(vi)] 
If $|x|\geq 2 \lambda$ and 
$z\in B^{(x,\lambda)}$,
then 
$$|z|\leq \lambda \leq  |x-z|\leq |x|\,,$$
where for $x\neq 0$ and $\lambda>0$  
$$
B^{(x,\lambda)}:= B(\lambda x/(2|x|), \lambda/2)\,.
$$ 
\end{itemize}

\noindent
{\it Step 2.}
Now, by 
\cite[Lemma~3.7]{TGKS-2019}
applied
to $Z^{1.\lambda}$
we have
that
there 
is
a constant
$
c=c(d,\alpha_3,c_3)\in [1,\infty)
$
such that
for every
$0<t<1/h_{1.\lambda}(T_3/c)$
there exists $|x_t|\leq c\,h_{1.\lambda}^{-1}(1/t)$
so that for every
$|z|\leq (1/c)\, h_{1.\lambda}^{-1}(1/t)$,
$$
p_{1.\lambda}(t, z+x_t+ t\drf^{1.\lambda}_{[h_{1.\lambda}^{-1}(1/t)]})
\geq (1/c) \left[h_{1.\lambda}^{-1}(1/t)\right]^{-d}.
$$
Set 
$\varepsilon=(1/c) \land (\theta/2)$,
$\lambda=\varepsilon\, h_{1.\lambda}^{-1}(1/t)$
and 
$r=h^{-1}(1/t)$.
By property (iv) we can use (v)
to get for all $t>0$,
$$
|\drf^{1.\lambda}_{[h_{1.\lambda}^{-1}(1/t)]}- \drf_{[h^{-1}(1/t)]}|
\leq 4 \varepsilon^{-2} t^{-1} h^{-1}(1/t)\,.
$$
By \cite[Lemma~2.1]{TGKS-2019} for all $t>0$,
$$
N_{2.\lambda}(\Rd)=\int_{|z|\geq \lambda} \frac12\LM(dz)
=\int_{|z|\geq \lambda} \LM_{1.\lambda}(dz)
\leq h_{1.\lambda}(\lambda)
\leq \varepsilon^{-2} t^{-1}\,.
$$
Thus,
by property (vi),
for all
$0<t<1/h_{1.\lambda}(T_3/c)$
and $|x|\geq \theta h_{1.\lambda}^{-1}(1/t)$,
with $ y_t=x_t+t\drf^{1.\lambda}_{[h_{1.\lambda}^{-1}(1/t)]}-t \drf_{[h^{-1}(1/t)]}$,
\begin{align*}
p(t,x+y_t + &t \drf_{[h^{-1}(1/t)]})
= \int_{\Rd} p_{1.\lambda}(t,x+y_t+t \drf_{[h^{-1}(1/t)]}-z) P_t^{2.\lambda}(dz)\\
&\geq \int_{\Rd} p_{1.\lambda}(t,x+y_t+t \drf_{[h^{-1}(1/t)]}-z)\, e^{-t \LM_{2.\lambda}(\Rd)}\,t\LM_{2.\lambda}(dz)\\
& \geq \int_{B^{(x,\lambda)}}  p_{1.\lambda}(t,z+y_t+t \drf_{[h^{-1}(1/t)]})\,\frac12 e^{-t \LM_{2.\lambda}(\Rd)}  \, t f(|x-z|)\,dz\\
& \geq \left(\int_{B^{(x,\lambda)}}  p_{1.\lambda}(t,z+y_t+t \drf_{[h^{-1}(1/t)]})\,dz\right) \frac12 e^{-t \LM_{2.\lambda}(\Rd)}  \, t f(|x|)\\
&= \left(\int_{B^{(x,\lambda)}}  p_{1.\lambda}(t,z+x_t+t\drf^{1.\lambda}_{[h_{1.\lambda}^{-1}(1/t)]})\,dz\right) \frac12 e^{-t \LM_{2.\lambda}(\Rd)}  \, t f(|x|)\\
&\geq (1/c)\, \omega_d (\varepsilon/2)^d\,  \frac12 e^{-1/\varepsilon^2}  t f(|x|)\,.
\end{align*}
By implications in property (iv) and
$$
|y_t|\leq |x_t|+|t\drf^{1.\lambda}_{[h_{1.\lambda}^{-1}(1/t)]}-t \drf_{[h^{-1}(1/t)]}|
\leq \left( c+ 4 \varepsilon^{-2}\right) h^{-1}(1/t)\,,
$$
the latter ends the proof of the last sentence of the proposition.

\noindent
{\it Step 3.}
It remains to consider the case that $T_3<\infty$ and $h(T_3/c)\leq t<T$.
By 
\cite[Lemma~3.7]{TGKS-2019}
applied to $Y$
we have
that
there 
is
a constant
$
c=c(d,\alpha_3,c_3,T_3,T,h)\in [1,\infty)
$
such that
for every
$0<u<T$
there exists $|x_u|\leq c\,h^{-1}(1/u)$
so that for every
$|z|\leq (1/c)\, h^{-1}(1/u)$,
$$
p(u, z+x_u+ u\,\drf_{[h^{-1}(1/u)]})
\geq (1/c) \left[h^{-1}(1/u)\right]^{-d}.
$$
We have already shown that
there 
is a constant $\tilde{c}=\tilde{c}(d,\alpha_3,c_3,\theta)\in (0,1]$ 
such that for every $0<u<1/h(T_3/c)$
there exists $|y_u|\leq (1/\tilde{c})\, h^{-1}(1/u)$
so that for all $|x-z|\geq \varepsilon\, h^{-1}(1/u)$,
\begin{align*}
p(u,x-z+y_u+u\,\drf_{[h^{-1}(1/u)]}) \geq \tilde{c} \,u f(|x-z|)\,.
\end{align*}
Let $t_0=(1/2) / h(T_3/c)$ and $\lambda_0=\varepsilon\,h^{-1}(1/t_0)$.
Since $t_0<2t_0=1/h(T_3/c)\leq t<T$,
by property~(vi),
for  $y_t= y_{t_0}+t_0 \drf_{[h^{-1}(1/t_0)]}+x_{t-t_0}+(t-t_0)\drf_{[h^{-1}(1/(t-t_0))]} - t \drf_{[h^{-1}(1/t)]}$ 
and all $|x|\geq \theta h^{-1}(1/t)$
we have 
\begin{align*}
p(t,x+&y_t+t \drf_{[h^{-1}(1/t)]})
=\int_{\Rd} p(t-t_0,z)\, p(t_0, x+y_t+t \drf_{[h^{-1}(1/t)]}-z)\,dz\\
&\geq \int_{B^{(x,\lambda_0)}} p(t-t_0,z+x_{t-t_0}+(t-t_0)\drf_{[h^{-1}(1/(t-t_0))]})\, p(t_0, x-z+y_{t_0}+t_0 \drf_{[h^{-1}(1/t_0)]})\,dz\\
&\geq  \left( \int_{B^{(x,\lambda_0)}}p(t-t_0,z+x_{t-t_0}+(t-t_0)\drf_{[h^{-1}(1/(t-t_0))]})\,dz \right)  \tilde{c} \,t_0 f(|x|)\\
&\geq (1/c) \left[ h^{-1}(1/(t-t_0))\right]^{-d}
\omega_d 
(\varepsilon/2)^d \left[ h^{-1}(1/t_0)\right]^{d}
 \tilde{c} \,t_0 f(|x|)\\
&\geq (1/c)\, \omega_d (\varepsilon/2)^d 
\left(\frac{h^{-1}(1/t_0)}{h^{-1}(1/T)}\right)^d 
\tilde{c}\, \frac{t_0}{T} \,t
f(|x|)\,.
\end{align*}
We also have $|x_{t-t_0}|\leq c\, h^{-1}(1/(t-t_0))\leq c\, h^{-1}(1/t)$, $|y_{t_0}|\leq (1/\tilde{c})\,h^{-1}(1/t_0)\leq (1/\tilde{c})\,h^{-1}(1/t)$
and by \cite[(8)]{TGKS-2019},
\begin{align*}
&|t_0 \drf_{[h^{-1}(1/t_0)]}+(t-t_0) \drf_{[h^{-1}(1/(t-t_0))]}-t \drf_{[h^{-1}(1/t)]}|\\
&=|t_0 (\drf_{[h^{-1}(1/t_0)]}-\drf_{[h^{-1}(1/t)]})+(t-t_0)(\drf_{[h^{-1}(1/(t-t_0))]}-\drf_{[h^{-1}(1/t)]})|\\
&\leq h^{-1}(1/t) \big[ t_0 h(h^{-1}(1/t_0))+(t-t_0) h(h^{-1}(1/(t-t_0))) \big]
\leq 2 h^{-1}(1/t)\,.
\end{align*}
That gives $|y_t|\leq (c +(1/\tilde{c})+2)h^{-1}(1/t)$ and ends the proof.
\qed

Again, a one-sided one-dimensional version is valid.

\begin{proposition}\label{prop:low_C3_R1}
Let $d=1$. Assume that $\Cc$ holds and
a L{\'e}vy measure $\LM(dx)$ satisfies
$$
\LM(dx) \geq f_+(x)dx \,,\qquad x>0\,,
$$
where 
$f_+ \colon (0,\infty)\to [0,\infty)$
is a non-increasing function.
Then for all $T,\theta>0$
there is a constant $\tilde{c}=\tilde{c}(d,\alpha_3,c_3,T_3,T,h,\theta)\in (0,1]$ 
such that for every $0<t<T$
there exists $|y_t|\leq (1/\tilde{c})\, h^{-1}(1/t)$
so that for all $x\geq \theta h^{-1}(1/t)$,
\begin{align*}
p(t,x+y_t+t\drf_{[h^{-1}(1/t)]}) \geq \tilde{c} \,t f_+(x)\,.
\end{align*}
The statement holds with 
$T=1/h(T_3/c)$, where $c=c(d,\alpha_3,c_3)$,
and $\tilde{c}=\tilde{c}(d,\alpha_3,c_3, \theta)$.
\end{proposition}

In the next result we concentrate on a L{\'e}vy process $Y$ with zero Gaussian part and such that some essential symmetric jumps  occur.
This allows us to remove the anonymous shift $y_t$ that appears in Theorem~\ref{prop:low_C3_Rd}.
The key ingredient of the proof is \cite[Theorem~5.2]{TGKS-2019}.

\begin{theorem}\label{thm:low_C3_sym_Rd}
Assume 
that $\Cc$ holds, $A=0$ and
a L{\'e}vy measure $\LM(dx)$ satisfies
$$
\LM(dx)\geq f(|x|)dx\,,
$$
where 
$f \colon (0,\infty)\to [0,\infty)$
is a non-increasing function.
Suppose
there is
 $a_1\in(0,1]$ 
such that
$$
a_1 \,\nu_s (dx) \leq \LM(dx)\,,
$$
and $a_2 \in [1,\infty)$ such that for every $|x|>1/T_3$,
$$
{\rm Re}[\LCh (x)] \leq a_2 \,{\rm Re}[\LCh_s (x)]\,. 
$$
Here $\nu_s(dx)$ is a symmetric L{\'e}vy measure; $\LCh_s(x)$ and $h_s(r)$ correspond to $(0,\nu_s,0)$.
Then
for all $T, \theta_1, \theta_2>0$ 
there 
is
a constant
$\tilde{c}=\tilde{c}(d,\alpha_3,c_3,T_3,a_1,a_2,\nu_s,T,\theta_1,\theta_2)\in (0,1]$  
such that for all
$0<t<T$,
$|y|\leq \theta_1 h_s^{-1}(1/t)$
and
 $|x|\geq \theta_2 h_s^{-1}(1/t)$,
$$
p(t,x+y+t \drf_{[h_s^{-1}(1/t)]}) \geq \tilde{c} \,t f(|x|)\,.
$$
The statement holds with 
$T=1/h_s(T_3/c)$, where $c=c(d,\alpha_3,c_3,a_2)$,
and $\tilde{c}=\tilde{c}(d,\alpha_3,c_3,T_3,a_1,a_2,\nu_s,\theta_1,\theta_2)$.
If $T_3=\infty$, we also have $\tilde{c}>0$.
\end{theorem}

\pf
{\it Step 1.} We decompose $Y$ into $Z^{1.\lambda}$ and $Z^{2.\lambda}$ like in {\it Step 1.} of the proof of Proposition~\ref{prop:low_C3_Rd}, but
recall that $A=0$.
Under present constraints we additionally have
\begin{itemize}
\item[(vii)]
For $\lambda>0$,
$$
\frac{a_1}{2} \nu_s (dx)\leq \LM_{1.\lambda}(dx)\,,
$$
and if $|x|>1/T_3$,
$$
{\rm Re} [\LCh_{1.\lambda}(x)]\leq a_2 \,{\rm Re}[\LCh_s (x)]\,.
$$
Thus $Z^{1.\lambda}$ satisfies the assumptions of 
\cite[Theorem~5.2]{TGKS-2019}.\\

\item[(viii)] For $\lambda>0$ and $r<T_3$,
$$
h_{1.\lambda}(r)\leq h(r) \leq  a_2 (c_d/c_3) h_s(r)\,,
$$
holds with $c_d=16(1+2d)$. See \cite[Remark~4.1(iii)]{TGKS-2019}. \\
\end{itemize}

\noindent
{\it Step 2.}
Let
$\varepsilon=1\land (\theta_2/2)$ and
$\theta_3=4 \varepsilon^{-2} a_2 (c_d/c_3)$.
Now, by the {\it Claim}  justified in the proof of \cite[Theorem~5.2]{TGKS-2019}
applied to $Z^{1.\lambda}$ and $\theta=\theta_1+\theta_2/2+\theta_3$,
there are $c=c(d,\alpha_3,c_3,a_2)\in [1,\infty)$
and $\tilde{c}=\tilde{c}(d,\alpha_3,c_3,T_3,a_1,a_2,\nu_s,\theta_1,\theta_2)\in (0,1]$ such that for all
$0<t<1/h_s(T_3/c)$ and $|w|\leq (\varepsilon+\theta_1+\theta_3) h_s^{-1}(1/t)$,
$$
p_{1.\lambda}(t,w+t \drf^{1.\lambda}_{[h_s^{-1}(1/t)]})\geq \tilde{c} \left[h_s^{-1}(1/t)\right]^{-d}.
$$
If $T_3=\infty$, we also have $\tilde{c}>0$.
Set
$\lambda= \varepsilon\, h_s^{-1}(1/t)$
and
$r=h_s^{-1}(1/t)$.
By properties (v) and~(viii) we have for all $0<t<1/h_s(T_3)$,
$$
|\drf_{[h_s^{-1}(1/t)]}-\drf^{1.\lambda}_{[h_s^{-1}(1/t)]}|
\leq 4r \varepsilon^{-2} h_{1.\lambda}(r)
\leq 4r \varepsilon^{-2} a_2 (c_d/c_3)  h_s(r)
= \theta_3 \,t^{-1} h_s^{-1}(1/t)\,.
$$
Further, by \cite[Lemma~2.1]{TGKS-2019} we have for all $0<t<1/h_s(T_3/\varepsilon)$,
$$
N_{2.\lambda}(\Rd)=\int_{|z|\geq \lambda} \frac12\LM(dz)
=\int_{|z|\geq \lambda} \LM_{1.\lambda}(dz)
\leq h_{1.\lambda}(\lambda)
\leq a_2 (c_d/c_3) h_s(\lambda)
\leq (\theta_3/4)\, t^{-1}\,.
$$
Thus,
by property (vi),
for all $0<t<1/h_s(T_3/c)$, 
$|y|\leq \theta_1 h_s^{-1}(1/t)$
and
$|x|\geq \theta_2 h_s^{-1}(1/t)$,
\begin{align*}
p(t,x+ & y+t \drf_{[h_s^{-1}(1/t)]})
= \int_{\Rd} p_{1.\lambda}(t,x+y+t \drf_{[h_s^{-1}(1/t)]}-z) P_t^{2.\lambda}(dz)\\
&\geq \int_{\Rd} p_{1.\lambda}(t,x+y+t \drf_{[h_s^{-1}(1/t)]}-z)\, e^{-t \LM_{2.\lambda}(\Rd)}\,t\LM_{2.\lambda}(dz)\\
& \geq \int_{B^{(x,\lambda)}}  p_{1.\lambda}(t,z+y+t \drf_{[h_s^{-1}(1/t)]})\,\frac12 e^{-t \LM_{2.\lambda}(\Rd)}  \, t f(|x-z|)\,dz\\
& \geq \left(\int_{B^{(x,\lambda)}}  p_{1.\lambda}(t,z+y+t \drf_{[h_s^{-1}(1/t)]})\,dz\right) \frac12 e^{-t \LM_{2.\lambda}(\Rd)}  \, t f(|x|)\\
&= \left(\int_{B^{(x,\lambda)}}  p_{1.\lambda}(t,z+y+t \drf_{[h_s^{-1}(1/t)]}-t \drf^{1.\lambda}_{[h_s^{-1}(1/t)]}+t \drf^{1.\lambda}_{[h_s^{-1}(1/t)]})\,dz\right) \frac12 e^{-t \LM_{2.\lambda}(\Rd)}  \, t f(|x|)\\
&\geq \tilde{c}\, \omega_d (\varepsilon/2)^d \,\frac12 e^{-\theta_3/4}\,  t f(|x|)\,,
\end{align*}
since 
$
|z+y+t \drf_{[h_s^{-1}(1/t)]}-t \drf^{1.\lambda}_{[h_s^{-1}(1/t)]}|\leq (\varepsilon+\theta_1+\theta_3) h_s^{-1}(1/t)
$.
This ends the proof of the last two sentences of the proposition.

\noindent
{\it Step 3.}
It remains to consider the case that $T_3<\infty$ and $h_s(T_3/c)\leq t<T$.
Let
$t_0=(1/2) / h_s(T_3/c)$
and
$
\theta_4= a_2 (c_d/c_3) [h_s^{-1}(1/T)/h_s^{-1}(1/t_0)]\, (T/t_0)
$.
By 
\cite[Theorem~5.2]{TGKS-2019}
applied to $Y$
we have
that
there 
is
a constant
$
\tilde{c}_1=\tilde{c}_1(d,\alpha_3,c_3,T_3,a_1,a_2,\nu_s,T,\theta_1,\theta_2)\in (0,1]
$
such that
for all
$0<u<T$
and
$|w|\leq (\varepsilon+\theta_1+\theta_4)\, h_s^{-1}(1/u)$,
$$
p(u, w+ u\,\drf_{[h_s^{-1}(1/u)]})
\geq \tilde{c}_1 \left[h_s^{-1}(1/u)\right]^{-d}.
$$
We have already shown that
there 
is a constant $\tilde{c}_2=\tilde{c}_2(d,\alpha_3,c_3,T_3,a_1,a_2,\nu_s,\theta_2)\in (0,1]$ 
such that for all $0<u<1/h_s(T_3/c)$
and $|x-z|\geq \varepsilon\, h_s^{-1}(1/u)$,
\begin{align*}
p(u,x-z+u\,\drf_{[h_s^{-1}(1/u)]}) \geq \tilde{c}_2 \,u f(|x-z|)\,.
\end{align*}
By \cite[(8)]{TGKS-2019}
and property (viii),
\begin{align*}
&|t \drf_{[h_s^{-1}(1/t)]}-t_0 \drf_{[h_s^{-1}(1/t_0)]}-(t-t_0) \drf_{[h_s^{-1}(1/(t-t_0))]}|\\
&=|(t-t_0)(\drf_{[h_s^{-1}(1/t)]}-\drf_{[h_s^{-1}(1/(t-t_0))]})
+t_0 (\drf_{[h_s^{-1}(1/t)]}-\drf_{[h_s^{-1}(1/t_0)]})|\\
&\leq h_s^{-1}(1/t) \big[ (t-t_0) h(h_s^{-1}(1/(t-t_0))+t_0 h(h_s^{-1}(1/t_0))) \big]\\
&\leq h_s^{-1}(1/T)\, t h(h_s^{-1}(1/t_0))
\leq 
\theta_4\, h_s^{-1}(1/(t-t_0))\,.
\end{align*}
Set $\lambda_0=\varepsilon\,h_s^{-1}(1/t_0)$.
Since $t_0<2t_0=1/h_s(T_3/c)\leq t<T$,
by property~(vi),
we have  for all $|x|\geq \theta_2 h_s^{-1}(1/t)$,
\begin{align*}
p(t,x+&y+t \drf_{[h_s^{-1}(1/t)]})
=\int_{\Rd} p(t-t_0,z)\, p(t_0, x+y+t \drf_{[h_s^{-1}(1/t)]}-z)\,dz\\
&\geq \int_{B^{(x,\lambda_0)}} p(t-t_0,z+y+t \drf_{[h_s^{-1}(1/t)]}-t_0 \drf_{[h_s^{-1}(1/t_0)]})\, p(t_0, x-z+t_0 \drf_{[h_s^{-1}(1/t_0)]})\,dz\\
&\geq  \left( \int_{B^{(x,\lambda_0)}}p(t-t_0,z+y+t \drf_{[h_s^{-1}(1/t)]}-t_0 \drf_{[h_s^{-1}(1/t_0)]})\,dz \right)  \tilde{c}_2 \,t_0 f(|x|)\\
&=  \left( \int_{B^{(x,\lambda_0)}}p(t-t_0,z+\bar{y}+(t-t_0) \drf_{[h_s^{-1}(1/(t-t_0))]})\,dz \right)  \tilde{c}_2 \,t_0 f(|x|)\\
&\geq \tilde{c}_1 \left[ h_s^{-1}(1/(t-t_0))\right]^{-d}
\omega_d 
(\varepsilon/2)^d \left[ h_s^{-1}(1/t_0)\right]^{d}
 \tilde{c}_2 \,t_0 f(|x|)\\
&\geq \tilde{c}_1\, \omega_d (\varepsilon/2)^d 
\left(\frac{h_s^{-1}(1/t_0)}{h_s^{-1}(1/T)}\right)^d 
\tilde{c}_2\, \frac{t_0}{T} \,t
f(|x|)\,.
\end{align*}
since $|z|\leq \varepsilon \,h_s^{-1}(1/t_0)\leq \varepsilon \,h_s^{-1}(1/(t-t_0))$,
$|y|\leq \theta_1 h_s^{-1}(1/t_0)\leq \theta_1 \,h_s^{-1}(1/(t-t_0))$
and
$$
\bar{y}:=y+t \drf_{[h_s^{-1}(1/t)]}-t_0 \drf_{[h_s^{-1}(1/t_0)]}-(t-t_0) \drf_{[h_s^{-1}(1/(t-t_0))]}\,,
$$
satisfies $|\bar{y}|\leq (\theta_1+\theta_4)\, h_s^{-1}(1/(t-t_0))$.
This ends the proof.
\qed

The next result
resembles
Theorem~\ref{thm:low_C3_sym_Rd},
but instead of using a symmetric measure $\nu_s$ we adopt arbitrary measure $\nu$ with additional restriction in the scaling condition of $\Cc$.
We skip the proof, which is the same as that of
Theorem~\ref{thm:low_C3_sym_Rd}
with   the
only change in {\it Step 2.} where the main component becomes this time
\cite[Theorem~5.2]{TGKS-2019}.

\begin{theorem}\label{thm:low_C3_scl1_Rd}
Assume 
that $\Cc$ holds with $\alpha_3\geq 1$ and $A=0$.  Let a L{\'e}vy measure $\LM(dx)$ satisfy
$$
\LM(dx)\geq f(|x|)dx\,,
$$
where 
$f \colon (0,\infty)\to [0,\infty)$
is a non-increasing function.
Suppose
there is
 $a_1\in(0,1]$ 
such that
$$
a_1 \,\nu (dx) \leq \LM(dx)\,,
$$
and $a_2 \in [1,\infty)$ such that for every $|x|>1/T_3$,
$$
{\rm Re}[\LCh (x)] \leq a_2 \,{\rm Re}[\LCh_{\nu} (x)]\,. 
$$
Here $\nu(dx)$ is a L{\'e}vy measure; $\LCh_{\nu}(x)$ and $h_{\nu}(r)$ correspond to $(0,\nu,0)$.
Then
for all $T, \theta_1, \theta_2>0$ 
there 
is
a constant
$\tilde{c}=\tilde{c}(d,\alpha_3,c_3,T_3,a_1,a_2,\nu,T,\theta_1,\theta_2)\in (0,1]$  
such that for all
$0<t<T$,
$|y|\leq \theta_1 h_{\nu}^{-1}(1/t)$
and
 $|x|\geq \theta_2 h_{\nu}^{-1}(1/t)$,
$$
p(t,x+y+t \drf_{[h_{\nu}^{-1}(1/t)]}) \geq \tilde{c} \,t f(|x|)\,.
$$
The statement holds with 
$T=1/h_{\nu}(T_3/c)$, where $c=c(d,\alpha_3,c_3,a_2)$,
and $\tilde{c}=\tilde{c}(d,\alpha_3,c_3,T_3,a_1,a_2,\nu,\theta_1,\theta_2)$.
If $T_3=\infty$, we also have $\tilde{c}>0$.
\end{theorem}

\begin{remark}
It is important for applications
that the results of
Theorem~\ref{thm:low_C3_sym_Rd}
and Theorem~\ref{thm:low_C3_scl1_Rd}
are uniform for the whole class of L{\'e}vy processes $Y$ as long as certain parameters do not change.
\end{remark}

The results of Theorem~\ref{thm:low_C3_sym_Rd}
and Theorem~\ref{thm:low_C3_scl1_Rd}
have their one-sided one-dimensional versions.
\begin{proposition}\label{prop:R1}
The following variant of Theorem~\ref{thm:low_C3_sym_Rd}
is true
(resp. Theorem~\ref{thm:low_C3_scl1_Rd})
if $d=1$: 
we replace the assumption $\LM(dx)\geq f(|x|)dx$ with
$$
\LM(dx)\geq f_+(x)dx\,,\qquad x>0\,,
$$
where $f_+ \colon (0,\infty)\to [0,\infty)$ is a non-increasing function,
further the condition $|x|\geq \theta_2 h_s^{-1}(1/t)$
(resp. $|x|\geq \theta_2 h_{\nu}^{-1}(1/t)$)
is replaced by
$$
x\geq \theta_2 h_s^{-1}(1/t)
\qquad
\,
(\mbox{resp. } x\geq \theta_2 h_{\nu}^{-1}(1/t))\,,
$$
and then the final inequality by
$$
p(t,x+y+t \drf_{[h_s^{-1}(1/t)]}) \geq \tilde{c} \,t f_+(x)
\qquad
\,
(\mbox{resp. } p(t,x+y+t \drf_{[h_{\nu}^{-1}(1/t)]}) \geq \tilde{c} \,t f_+(x))\,.
$$
\end{proposition}

According to Proposition~\ref{prop:R1} 
by the one one-sided one-dimensional version of Theorem~\ref{thm:low_C3_scl1_Rd}
with $\nu=\LM$
the following holds.

\begin{corollary}\label{cor:low_C3_scl1_R1}
Let $d=1$.
Assume that $\Cc$
holds with $\alpha_3\geq 1$ and $A=0$.
Suppose
that a L{\'e}vy measure $\LM(dx)$  is such that
$$
\LM(dx) \geq  f_+(x)dx\,,\qquad x>0\,,
$$
where 
$f_+\colon (0,\infty)\to [0,\infty)$
is a non-increasing function.
Then for all $T,\theta_1, \theta_2 >0$ there 
is a constant $\tilde{c}=\tilde{c}(\alpha_3,c_3,T_3,\LM,T,\theta_1,\theta_2)$
such that for all
$0<t<T$,
$|y|\leq \theta_1 h^{-1}(1/t)$
and
 $|x|\geq \theta_2 h^{-1}(1/t)$,
\begin{align*}
p(t,x+y+t\drf_{[h^{-1}(1/t)]}) \geq \tilde{c} \,t f_+(x)\,, 
\end{align*}
The statement holds with 
$T=1/h(T_3/c)$, where $c=c(d,\alpha_3,c_3,a_2)$,
and $\tilde{c}=\tilde{c}(d,\alpha_3,c_3,T_3,\LM,\theta_1,\theta_2)$.
If $T_3=\infty$, we also have $\tilde{c}>0$.
\end{corollary}

\begin{remark}\label{rem:x_neg2}
One-sided one-dimensional versions of Lemma~\ref{lem:low_general_R1}, Proposition~\ref{prop:low_C3_R1} and~\ref{prop:R1}, 
and Corollary~\ref{cor:low_C3_scl1_R1}
for $x<0$ are also valid, cf. Corollary~\ref{cor:one_side} and Remark~\ref{rem:x_neg1}.
\end{remark}

\section{L{\'e}vy measure comparable with a unimodal one}\label{subsec:unim_comp}
\label{sec:cU}

We concentrate on
a L{\'e}vy process $Y$ in $\Rd$
with a generating triplet $(A,\LM,\drf)$, where $A=0$,
$\drf\in\Rd$ and
$\LM(dx)=\gLM(x)dx$
satisfies for some $\lmC \geq 1$,
\begin{equation}\label{e:assumption-j0}
\lmC^{-1}\uLM(|x|)\leq \gLM(x) \leq \lmC \uLM(|x|)\, , \qquad x\in \Rd\, ,
\end{equation}
where $\uLM\colon [0,\infty)\to [0,\infty]$ is a non-increasing function.
In other words, $\LM(dx)$ is comparable with an isotropic unimodal L{\'e}vy measure
$\uLM(|x|)dx$.
We denote 
the corresponding characteristic exponent by $\uLCh$,
and similarly functions $\uK$ and $\uh$.
At this point we refer
the reader to Section~\ref{sec:uni_app}
for auxiliary results on isotropic unimidal L{\'e}vy processes.
Further, we consider the scaling condition for $\uh$
(see \cite[Remark~2.12]{TGKS-2019}): there are 
$\ulah \in(0,2]$, $C_{\uh}\in[1,\infty)$ and $\theta_{\uh}\in(0,\infty]$ such that
\begin{equation}\label{eq:wlsc:h}
 \uh(r)\leq C_{\uh}\lambda^{\ulah } \uh(\lambda r),\qquad \lambda\leq 1,\, r< \theta_{\uh}.
\end{equation} 
\begin{remark}\label{rem:uni_scal_C3}
We stress that under the assumption of  \eqref{e:assumption-j0}
the condition \eqref{eq:wlsc:h} is equivalent with $\Cc$ for the process $Y$,
see \cite[Lemma~3.5]{TGKS-2019}
or more directly \eqref{e:assumption-j} below.
\end{remark}

We  focus on the upper estimates of the density and its derivatives. To this end we will use Theorem~\ref{thm:KP}.
We simplify the notation by introducing
for  $t>0$ and $x\in \Rd$  {\it the bound function},
\begin{align}\label{def:bound_function}
\rr_t(x)=\left( [\uh^{-1}(1/t)]^{-d}\land \frac{t\uK(|x|)}{|x|^{d}}\right).
\end{align}

\begin{theorem}\label{p:upperestonp}
Assume that the L{\'e}vy measure $\LM(dx)$ satisfies
\eqref{e:assumption-j0} and that \eqref{eq:wlsc:h} holds for~$\uh$.
Let $\bbbeta\in \mathbb{N}_0^d$,
$c_d=16(1+2d)$ and $\theta >0$. 
Then 
for all $t< 1/[ 2 c_d \lmC \uh(\theta_{\uh})]$,
$|y|\leq \theta \uh^{-1}(1/t)$
 and $x\in\Rd$,
\begin{align}\label{ineq:Grad_0}
|\partial_x^{\bbbeta}\, p(t,x+y+ t \drf_{[\uh^{-1}(1/t)]})|
\leq 
\ubC \left[ \uh^{-1}(1/t) \right]^{-|\bbbeta|}
\rr_t (x)\,,
\end{align}
holds 
with $\ubC=\ubC(d,\bbbeta, \ulah,C_{\uh},\lmC,\theta)$.
\end{theorem}

\pf
From \eqref{e:assumption-j0} it follows that $\lmC^{-1} \uh (r)\leq h(r) \leq \lmC \uh (r)$, $r>0$,
and combining it with \eqref{ineq:comp_unimod} and 
\eqref{ineq:comp_TJ} gives
for every $x\neq 0$ and $r=|x|$,
\begin{equation}\label{e:assumption-j}
((c_d/2)\pi^2 \lmC)^{-1}\, \uh(1/r) \leq {\rm Re}[ \LCh(x)] \leq \LCh^*(r)\leq (2 \lmC)\, \uh(1/r)\,. 
\end{equation}
Together with \eqref{eq:wlsc:h} and Lemma~\ref{lem:inverse_c}
this gives
\begin{align*}
\left[\uh^{-1}(u/(2\lmC))\right]^{-1} \leq \LCh^{-1}(u)\leq \left[\uh^{-1}((c_d/2)\pi^2 \lmC u)\right]^{-1}\,,\qquad u>0\,.  
\end{align*}
Applying \cite[Lemma~2.3]{TGKS-2019} we obtain
\begin{align}\label{ineq:inverse_equiv}
 \frac{[C_{\uh} 2\lmC]^{-1/\ulah}}{\uh^{-1}(u)} \leq \LCh^{-1}(u)\leq \frac{[C_{\uh} (c_d/2)\pi^2 \lmC]^{1/\ulah}}{\uh^{-1}(u)}\,,\qquad u> 2\lmC \uh(\theta_{\uh})\,.
\end{align}

\noindent
We verify assumptions of Theorem~\ref{thm:KP}. We define a decreasing function $f(s)=M_1 s^{-d}\uK(s)$ and
we have $f(s/2)\leq 2^{d+2}f(s)$ (see 
Lemma~\ref{lem:basic_prop_K_h+} and
\cite[Lemma~2.1]{TGKS-2019}),
where the constant
$M_1=M_1(d,\lmC)$
is chosen such that
\eqref{KP_as1} is satisfied  with $f$ and $\gamma=d$. This is indeed possible since for a Borel set $\setA \subseteq \Rd$, $x\in \setA$, we have
\begin{align*}
\LM(\setA)&\leq \lmC \int_{\setA} \uLM(|z|)\,dz \leq \lmC\, \uLM(\delta(\setA))|\setA|\leq \lmC\, \uLM (\delta(\setA))\,|B(x,{\rm diam}(\setA))|\\
&= \left(\lmC\, 
\omega_d/d
\right) \uLM (\delta(\setA)) \big[{\rm diam}(\setA) \big]^d\,,
\end{align*}
and $\uLM(r)\leq c(d) f(r)$.
Furthermore, since $f$ is decreasing $f(s\vee |y|-|y|/2)\leq f((s\vee|y|)/2)\leq f(s/2)$ for $s>0$, $y\in\Rd$.
Consequently, by 
\eqref{ineq:comp_TJ}
 we have for $s,r>0$,
\begin{align*}
\int_{|y|>r} f(s\vee|y|-|y|/2)\,\LM(dy)
\leq  f(s/2)h(r)
\leq  2^{d+1} \frac{c_d}{M_1} f(s) \LCh^*(1/r)\,.
\end{align*}
Thus \eqref{KP_as2} holds for $f$ with $M_2=M_2(d,\lmC)$. 
Now,  
using \eqref{e:assumption-j} and \eqref{eq:wlsc:h} we get that
$\Cc$ holds for $\LCh$
with $\alpha_3=\ulah$, $c_3=c_3(d,C_{\uh},\lmC)$ and $T_3=\theta_{\uh}$. 
By \cite[Theorem 3.1]{TGKS-2019} and Lemma~\ref{lem:eqiv_h_LCh}
we obtain
\eqref{KP_as8} 
with $M_5=M_5(d,\ulah,C_{\uh},\lmC)$ and $T=1/(2\uh(\theta_{\uh}))$.
Then by Theorem~\ref{thm:KP} with $m=d+2+|\bbbeta|$ and $k=d+2$ there exists a constant 
$c=c(d,\bbbeta,\ulah,C_{\uh},\lmC)$ (which may change from line to line in what follows) such that
\begin{align*}
|\partial_{x}^{\bbbeta} p\left(t, x+t \drf_{\left[1/\LCh^{-1}(1/t)\right]}\right)|
&\leq c \left[\LCh^{-1}(1/t) \right]^{|\bbbeta|} \left(\left[\LCh^{-1}(1/t) \right]^{d}\land \left(tf(|x|/4)  + \frac{\left[\LCh^{-1}(1/t)\right]^d}{\left[ 1+ |x|\LCh^{-1}(1/t)\right]^{d+2}}\right) \right)\\
&\leq c \left[\LCh^{-1}(1/t) \right]^{|\bbbeta|} \left(\left[\LCh^{-1}(1/t) \right]^{d}\land \left(tf(|x|)  + \left[|x|\LCh^{-1}(1/t)\right]^{-2}|x|^{-d} \right) \right).
\end{align*}
Here and below $t\in (0,T/(c_d \lmC )]$.
By \eqref{ineq:inverse_equiv}
\begin{align*}
|\partial_{x}^{\bbbeta} p\left(t, x+t\drf_{\left[1/\LCh^{-1}(1/t) \right]}\right)|
\leq c\left[\uh^{-1}(1/t) \right]^{-|\bbbeta|} \left(\left[\uh^{-1}(1/t) \right]^{-d}\land \left(tf(|x|)  + \left[\uh^{-1}(1/t)/|x|\right]^{2} |x|^{-d} \right) \right).
\end{align*}
If $|x|\leq \uh^{-1}(1/t)$,
 then  we bound the above minimum by $\left[\uh^{-1}(1/t) \right]^{-d}$. 
If
$|x|> h^{-1}(1/t)$, 
we proceed as follows: recall that $\lambda^2 \leq \uh(r)/\uh(\lambda r)$, $\lambda\leq 1$, $r>0$. Taking $\lambda=\uh^{-1}(1/t)/(|x|\land \theta_{\uh})$ and $r=|x|\land \theta_{\uh}$
we get
\begin{align*}
\left(\frac{\uh^{-1}(1/t)}{|x|\land \theta_{\uh}}\right)^2\leq  \frac{\uh(|x|\land \theta_{\uh})}{\uh\left(\uh^{-1}(1/t) \right)}= t\uh(|x|\land \theta_{\uh})\,.
\end{align*} 
Then
\begin{align*}
\left(\frac{\uh^{-1}(1/t)}{|x|}\right)^2
&=\left(\frac{\uh^{-1}(1/t)}{|x|\land \theta_{\uh}}\right)^2\frac{(|x|\land \theta_{\uh})^2}{|x|^2}\leq 
t \uh(|x|\land \theta_{\uh}) \frac{(|x|\land \theta_{\uh})^2\uK(|x|\land \theta_{\uh})}{|x|^2\uK(|x|)}\frac{\uK(|x|)}{\uK(|x|\land \theta_{\uh})}\\
&\leq (\uh(|x|\land \theta_{\uh})/\uK(|x|\land \theta_{\uh}))\, t \uK(|x|),
\end{align*}
since $r\mapsto r^2\uK(r)$ is non-decreasing.
\cite[Lemma~2.3]{TGKS-2019},
continuity of $\uK$ and $\uh$ assert that the quotient $\uh(|x|\land\theta_{\uh})/\uK(|x|\land\theta_{\uh})$ is bounded by a constant depending only on $\ulah, C_{\uh}$.
 Therefore, 
the above minimum is bounded by $c tf(|x|)$.
Now, Corrolary~\ref{cor:por_0} provides 
that
$$
|\partial_{x}^{\bbbeta} p\left(t, x +y+t \drf_{[\uh^{-1}(1/t)]}\right)|\leq c \rr_t\left(x+y+t( \drf_{[\uh^{-1}(1/t)]}- \drf_{\left[1/\LCh^{-1}(1/t))\right]})\right)\,.
$$
The claim follows eventually from 
\eqref{ineq:inverse_equiv}
and Lemma~\ref{lem:small_drif}, and Corollary~\ref{cor:small_shift}.
\qed

\begin{lemma}\label{lem:small_drif}
Assume \eqref{e:assumption-j0}.
Let $r_1,r_2,t>0$ satisfy $c^{-1} \uh^{-1}(1/t)\leq r_i \leq c\uh^{-1}(1/t)$, $i=1,2$,
for a constant $c\geq 1$. Then with $a=c^3 \lmC$ we have 
$$
t |\drf_{r_1}-\drf_{r_2} |\leq a \uh^{-1}(1/t)\,.
$$
\end{lemma}
\pf
We apply \cite[(8)]{TGKS-2019},
\eqref{e:assumption-j0},
$r_1\vee r_2 \leq c \uh^{-1}(1/t)$ and $\uh(r_1\land r_2)\leq \uh(c^{-1}\uh^{-1}(1/t))\leq c^2 t^{-1}$.

\qed

It is straightforward that if $\LM(dz)$ 
 is symmetric, then  $\drf_{[\uh^{-1}(1/t)]}$ reduces to $\drf$ and  \eqref{ineq:Grad_0} simplifies.
Actually, a similar effect may be achieved by a correct choice or replacement of $y$ in the statement of Theorem~\ref{p:upperestonp}.
It is, for instance, feasible locally in time if $\lah>1$, since in such case  
the intrinsic (first order) drift term
is  dominated
by the non-locality.
Another type of reduction is possible if
we can properly control 
$z\LM(dz)$.
We collect those cases in the next result, but first
in a similar fashion to \eqref{eq:wlsc:h}
 we consider
$\beta_{\uh}\in (0,2]$, $c_{\uh}\in (0,1]$ and $\theta_{\uh}\in (0,\infty]$
such that
\begin{equation}\label{eq:wusc:h}
c_{\uh}\,\lambda^{\beta_{\uh}} \,\uh(\lambda r) \leq \uh(r)\, ,\qquad \lambda\leq 1, \,r< \theta_{\uh}\,.
\end{equation}
If $\theta_{\uh}<\infty$ we extend the scaling to $r<R$ as follows, for $\theta_{\uh}\leq r <R$,
$$
h(r)\geq (\theta_{\uh}/R)^2 h(\theta_{\uh}) \geq c_{\uh} (\theta_{\uh}/R)^2 \lambda^{\beta_{\uh}} h(\lambda \theta_{\uh}) \geq [c_{\uh} (\theta_{\uh}/R)^2] \lambda^{\beta_{\uh}} h(\lambda r)\,.
$$

\begin{proposition}\label{prop:shift_change}
Suppose that the assumptions of Theorem~\ref{p:upperestonp} are satisfied.
Let $\bbbeta\in \mathbb{N}_0^d$,
$c_d=16(1+2d)$. 
Then
\begin{align*}
|\partial_x^{\bbbeta}\, p\left(t,x+t\drf_{r_*}\right)|\leq 
\ubC \left[\uh^{-1}(1/t) \right]^{-|\bbbeta|} 
\rr_t(x)\,,
\end{align*}
holds  for all $t< 1/[2 c_d  \lmC \uh(\theta_{\uh})]$ and $x\in\Rd$ in each of the following cases:
\\

\begin{enumerate}
\item[i)] with $r_*=1$ and $\ubC=\ubC(d,\bbbeta,\ulah,C_{\uh},\theta_{\uh},\lmC, \uh)$ if  $\ulah>1$ and $\theta_{\uh}<\infty$ in \eqref{eq:wlsc:h}.\\

\item[ii)] with $r_*\in [0,\infty]$ and $\ubC=\ubC(d,\bbbeta,\ulah,C_{\uh},\lmC,\uh, c_*)$ if $\drf_{r_*}$ is well defined and there is a constant $c_*\geq 0$ such that
for all $r<\theta_{\uh}$,
\begin{align*}
\left| \int_{\Rd} z \left(\ind_{|z|<r} - \ind_{|z|<r_*}\right)\LM(dz) \right| \leq c_* r \uh(r) \,.
\end{align*}
Note: if $r_*\in (0,\infty)$, then $\drf_{r_*}$ is well defined. The marginal cases $\drf_0=\drf-\int_{|z|<1}z\LM(dz)$ and $\drf_{\infty}=\drf+\int_{|z|\geq 1}z \LM(dz)$
require verification.\\

\item[iii)] with $r_*=\theta_{\uh}$ and 
$\ubC=\ubC(d,\bbbeta,\ulah,C_{\uh},\lmC, \uh)$
if $\ulah>1$ in \eqref{eq:wlsc:h}.\\

\item[iv)] with $r_*=0$ and 
$\ubC=\ubC (d,\bbbeta,\ulah,C_{\uh},\beta_{\uh},c_{\uh},\lmC)$
if \eqref{eq:wusc:h} holds 
with $\beta_{\uh}<1$. 
\end{enumerate}
\end{proposition}
\pf
By \eqref{e:assumption-j0} we have $\lmC^{-1} \uh (r)\leq h(r) \leq \lmC \uh (r)$, $r>0$, and \eqref{eq:wlsc:h} transfers to $h$.  For the proof of part i) we apply \cite[Corollary~2.11]{TGKS-2019} 
with $r=\uh^{-1}(1/t)<\theta_{h_0}$ to get for all $t<1/\uh(\theta_{\uh})$,
$$t |\drf- \drf_{[\uh^{-1}(1/t)]} |
\leq a\uh^{-1}(1/t) \,,$$ 
where $a=a(\ulah,C_{\uh},\theta_{\uh},\lmC)$.
The result follows from Theorem~\ref{p:upperestonp} with $y=t(\drf - \drf_r)$ and $\theta=a$.
Now, by the assumptions of part ii)  with $r=\uh^{-1}(1/t)<\theta_{h_0}$ we get
$t|\drf_{r_*}-\drf_r|\leq t c_* rh_0(r)=c_* h_0^{-1}(1/t)$
for all $t<1/\uh(\theta_{\uh})$,
 and the inequality follows from 
Theorem~\ref{p:upperestonp}
with $y=t(\drf_{r_*}-\drf_r)$ and $\theta=c_*$.
Part iii) results from part ii)
by
Lemma~\ref{lem:int_J} (see also \cite[Lemma~2.10]{TGKS-2019})
and it covers the case $\theta_{\uh}=\infty$, since then the integral defining $\drf_{\infty}$ converges absolutely.
Part iv) also follows 
from part ii)
and
Lemma~\ref{lem:int_J}, the quantity $\drf_0$ is well defined. 

\qed

We pass to the lower estimates of $p(t,x)$.
Corollary~\ref{cor:low} follows from
Remark~\ref{rem:uni_scal_C3} and
\cite[Theorem~5.2]{TGKS-2019}.
Corollary~\ref{cor:low_space}
is a consequence of 
Remark~\ref{rem:uni_scal_C3} and
Theorem~\ref{thm:low_C3_sym_Rd}.

\begin{corollary}\label{cor:low}
Assume that the L{\'e}vy measure $\LM(dx)$ satisfies
\eqref{e:assumption-j0} and that \eqref{eq:wlsc:h} holds for~$\uh$.
Then
for all $T, \theta>0$ there 
is a constant
$\tilde{c}=\tilde{c}(d,\ulah,C_{\uh},\theta_{\uh},\uLM,\lmC,T,\theta)>0$ such that
for all $0<t<T$ and $|x|\leq\theta \uh^{-1}(1/t)$,
$$
p(t,x+t \drf_{[ \uh^{-1}(1/t)]}) \geq \tilde{c} \left[ \uh^{-1}(1/t)\right]^{-d}\,.
$$
If $\theta_{\uh}=\infty$, we can also take $T=\infty$ with $\tilde{c}>0$.
\end{corollary}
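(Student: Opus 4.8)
The statement is an immediate consequence of Theorem~\ref{thm:low_jump_impr} applied with the isotropic unimodal L\'evy measure $\nu_s(dx):=\uLM(|x|)\,dx$. Since we are in the pure-jump regime $A=0$, and the characteristic exponent and the function $h$ attached to the generating triplet $(0,\nu_s,0)$ are precisely $\uLCh$ and $\uh$, in the notation of Theorem~\ref{thm:low_jump_impr} this means $\LCh_s=\uLCh$ and $h_s=\uh$; in particular the drift shift $\drf_{[h_s^{-1}(1/t)]}$ there is exactly $\drf_{[\uh^{-1}(1/t)]}$.

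Next I would check the hypotheses of Theorem~\ref{thm:low_jump_impr}. The condition $\Cc$ holds for $Y$: by Remark~\ref{rem:uni_scal_C3}, under \eqref{e:assumption-j0} the scaling \eqref{eq:wlsc:h} is equivalent to $\Cc$, and more precisely (as recorded in the proof of Theorem~\ref{p:upperestonp}) $\Cc$ holds with $\alpha_3=\ulah$, $c_3=c_3(d,C_{\uh},\lmC)$ and $T_3=\theta_{\uh}$. The lower bound in \eqref{e:assumption-j0} gives $\lmC^{-1}\nu_s(dx)=\lmC^{-1}\uLM(|x|)\,dx\leq \gLM(x)\,dx=\LM(dx)$, so we may take $a_1=\lmC^{-1}\in(0,1]$. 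For the comparison of the characteristic exponents, since $1-\cos\langle x,z\rangle\geq 0$ and $\gLM(z)\leq\lmC\,\uLM(|z|)$,
\begin{align*}
{\rm Re}[\LCh(x)]=\int_{\Rd}\bigl(1-\cos\langle x,z\rangle\bigr)\gLM(z)\,dz\leq \lmC\int_{\Rd}\bigl(1-\cos\langle x,z\rangle\bigr)\uLM(|z|)\,dz=\lmC\,{\rm Re}[\LCh_s(x)]\,,
\end{align*}
so $a_2=\lmC$ works (in fact for every $x\in\Rd$); alternatively one can invoke \eqref{e:assumption-j}.

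With these choices Theorem~\ref{thm:low_jump_impr} yields, for all $T,\theta>0$, a constant $\tilde c=\tilde c(d,\alpha_3,c_3,T_3,a_1,a_2,\nu_s,T,\theta)>0$ such that $p(t,x+t\drf_{[\uh^{-1}(1/t)]})\geq\tilde c\,[\uh^{-1}(1/t)]^{-d}$ for $0<t<T$ and $|x|\leq\theta\,\uh^{-1}(1/t)$; substituting the values found above, $\tilde c$ depends only on $d,\ulah,C_{\uh},\theta_{\uh},\uLM,\lmC,T,\theta$, as asserted. When $\theta_{\uh}=\infty$ we have $T_3=\infty$, and the corresponding clause of Theorem~\ref{thm:low_jump_impr} permits $T=\infty$. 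There is no genuine obstacle here since this is a corollary; the only points requiring a little care are the identification $\nu_s=\uLM(|\cdot|)\,dx$ (hence $\LCh_s=\uLCh$, $h_s=\uh$) and tracking how the constants of Theorem~\ref{thm:low_jump_impr} specialize under \eqref{e:assumption-j0} and \eqref{eq:wlsc:h}.
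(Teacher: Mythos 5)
Your proposal is correct and matches the paper's own (one-line) derivation: the paper likewise obtains the corollary by applying Theorem~\ref{thm:low_jump_impr} with $\nu_s(dx)=\uLM(|x|)\,dx$, using Remark~\ref{rem:uni_scal_C3} to get $\Cc$. Your explicit verification of $a_1=\lmC^{-1}$, $a_2=\lmC$ and the tracking of constants is exactly the routine checking the paper leaves implicit.
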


\begin{corollary}\label{cor:low_space}
Assume that the L{\'e}vy measure $\LM(dx)$ satisfies
\eqref{e:assumption-j0} and that \eqref{eq:wlsc:h} holds for~$\uh$.
Then for all $T,\theta_1, \theta_2 >0$ there 
is a constant
$\tilde{c}=\tilde{c}(d,\ulah,C_{\uh},\theta_{\uh},\uLM,\lmC,T,\theta_1,\theta_2)>0$
such that for all
$0<t<T$,
$|y|\leq \theta_1 \uh^{-1}(1/t)$
and
 $|x|\geq \theta_2 \uh^{-1}(1/t)$,
\begin{align*}
p(t,x+y+t\drf_{[\uh^{-1}(1/t)]}) \geq \tilde{c} \,t \uLM (|x|)\,, 
\end{align*}
If $\theta_{\uh}=\infty$, we can also take $T=\infty$ with $\tilde{c}>0$.
\end{corollary}

\begin{remark}\label{rem:lower_bound_sharp}
If we put Corollary~\ref{cor:low} and 
Corollary~\ref{cor:low_space}
together,
we find out  that 
under \eqref{e:assumption-j0} and~\eqref{eq:wlsc:h}
for
all $0<t<T$,
$|y|\leq \theta_1 \uh^{-1}(1/t)$
and $x\in\Rd$,
\begin{align*}
p(t,x+y+t\drf_{[\uh^{-1}(1/t)]}) \geq \tilde{c} \left( \left[ \uh^{-1}(1/t)\right]^{-d}  \land  \,t \uLM (|x|)\right)\,.
\end{align*}

\noindent
It is natural to ask under which conditions the lower bound
agrees with the upper bound~\eqref{ineq:Grad_0},
i.e., when the estimates are sharp 
(at least locally in time and space).
In our setting 
the question reads whether $\uLM(r)$ is comparable with $r^{-d}\uK(r)$,
that is if the latter is dominated by $\uLM(r)$
(the converse always holds,
Lemma~\ref{lem:basic_prop_K_h+}).
The answer to this question is given in Lemma~\ref{lem:comp_uK_uLM}
by means of scaling properties of $\uK$, or equivalently of $\uLM$.
\end{remark}

Here is a direct consequence of 
Remark~\ref{rem:lower_bound_sharp}
and Lemma~\ref{lem:comp_uK_uLM}
(cf. \cite[Theorem~21 and~26]{MR3165234}).

\begin{corollary}\label{cor:low_sharp}
Assume that the L{\'e}vy measure $\LM(dx)$ satisfies
\eqref{e:assumption-j0} and that \eqref{eq:wlsc:h} holds for~$\uh$.
Furthermore, let $\beta_{\uLM}\in [0,2)$, $c_{\uLM}\in (0,1]$ and $R_{\uLM}\in (0,\infty]$
be such that for all $\lambda\leq 1$ and $r<R_{\uLM}$,
$$
c_{\uLM} \lambda^{d+\beta_{\uLM}} \uLM(\lambda r) \leq \uLM(r)\,.
$$
Then
for every $T>0$
there is 
$\tilde{c}=\tilde{c}(d,\ulah,C_{\uh},\theta_{\uh},\uLM,\lmC,T,\beta_{\uLM},c_{\uLM})$ such that for all 
$0<t<T$ and $|x|<R_{\uLM}$,
$$
p(t,x+t\drf_{[\uh^{-1}(1/t)]}) \geq \tilde{c}\, \rr_t(x) \,.
$$
If $\theta_{\uh}=\infty$, we can also take $T=\infty$ with $\tilde{c}>0$.
\end{corollary}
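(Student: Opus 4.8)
The plan is to obtain the corollary purely by combining the two-sided lower bound recorded in Remark~\ref{rem:lower_bound_sharp} with the pointwise comparison between $\uK(r)/r^{d}$ and $\uLM(r)$ supplied by Lemma~\ref{lem:comp_uK_uLM}; there is no new analytic content. First I would record the lower bound: since \eqref{e:assumption-j0} and \eqref{eq:wlsc:h} are in force, Remark~\ref{rem:lower_bound_sharp} (applied with $y=0$, and with the free parameters there fixed to, say, $1$) gives for every $T>0$ a constant $\tilde{c}_0=\tilde{c}_0(d,\ulah,C_{\uh},\theta_{\uh},\uLM,\lmC,T)>0$ such that for all $0<t<T$ and all $x\in\Rd$,
\[
p(t,x+t\drf_{[\uh^{-1}(1/t)]}) \geq \tilde{c}_0\left([\uh^{-1}(1/t)]^{-d}\land t\,\uLM(|x|)\right),
\]
with $T=\infty$ admissible when $\theta_{\uh}=\infty$. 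This is the substantive half of the argument, and it is already available.

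Next I would convert the scaling hypothesis on $\uLM$ into a pointwise inequality. The assumed bound $c_{\uLM}\lambda^{d+\beta_{\uLM}}\uLM(\lambda r)\leq\uLM(r)$ for $\lambda\leq 1$, $r<R_{\uLM}$, is precisely the condition which, by Lemma~\ref{lem:comp_uK_uLM}, is equivalent to the existence of a constant $c_1\geq 1$ depending only on $d,\beta_{\uLM},c_{\uLM}$ with $\uK(r)/r^{d}\leq c_1\,\uLM(r)$ for all $r<R_{\uLM}$. (The reverse inequality $\uLM(r)\leq c\,\uK(r)/r^{d}$ holds unconditionally by Lemma~\ref{lem:basic_prop_K_h+}, but is not needed here.)

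Finally, for $0<t<T$ and $|x|<R_{\uLM}$, since $c_1\geq 1$ one has, using $a\land(c_1b)\leq(c_1a)\land(c_1b)=c_1(a\land b)$,
\[
\rr_t(x)=[\uh^{-1}(1/t)]^{-d}\land\frac{t\,\uK(|x|)}{|x|^{d}} \leq c_1\left([\uh^{-1}(1/t)]^{-d}\land t\,\uLM(|x|)\right),
\]
so that the displayed lower bound for $p$ yields $p(t,x+t\drf_{[\uh^{-1}(1/t)]})\geq(\tilde{c}_0/c_1)\,\rr_t(x)$, which is the assertion with $\tilde{c}=\tilde{c}_0/c_1$ and the claimed dependence of constants; the case $\theta_{\uh}=\infty$ is inherited from Remark~\ref{rem:lower_bound_sharp}, since the $\uK$--$\uLM$ comparison is independent of $t$. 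The only thing requiring care is bookkeeping: matching the range $|x|<R_{\uLM}$ in Lemma~\ref{lem:comp_uK_uLM} with the range in the statement, and confirming that $\tilde{c}_0$ depends only on the listed parameters. I do not expect any genuine obstacle here — the corollary is just a repackaging of Remark~\ref{rem:lower_bound_sharp} and Lemma~\ref{lem:comp_uK_uLM}.
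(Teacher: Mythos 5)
Your proposal is correct and follows exactly the route the paper intends: the paper states the corollary as a "direct consequence of Remark~\ref{rem:lower_bound_sharp} and Lemma~\ref{lem:comp_uK_uLM}" without further detail, and your argument supplies precisely that combination — the lower bound with $y=0$, the implication (iii)$\Rightarrow$(i) of Lemma~\ref{lem:comp_uK_uLM} giving $\uK(r)r^{-d}\leq c_1\uLM(r)$ for $r<R_{\uLM}$, and the elementary comparison of the two minima. The constant bookkeeping also matches the stated dependence.
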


\noindent
{\bf Proof of Theorem~\ref{thm:int1}}
We use \cite[Remark~2.12]{TGKS-2019} when needed without mentioning.
Note that the condition \eqref{e:assumption-j0} is satisfied with $\uLM=g$.
We show that {\it(a)} implies {\it(c)}. 
By \cite[Lemma~2.3]{TGKS-2019}
and 
$h\approx \uh$
we  have \eqref{eq:wlsc:h}.
Then the first inequality follows from
Theorem~\ref{p:upperestonp} and
the comparability of $K$ and $\uK$, $h^{-1}$ and $\uh^{-1}$.
The second inequality follows from
Remark~\ref{rem:lower_bound_sharp}
with $y=t\drf_{[h^{-1}(1/t)]}-t\drf_{[\uh^{-1}(1/t)]}$.
For both see Lemma~\ref{lem:small_drif} and Remark~\ref{cor:small_shift}.
Obviously, {\it (c)} gives $\Ca$.
Thus {\it (c)} implies {\it (b)} by 
\cite[Theorem~3.1]{TGKS-2019}.
Finally, {\it (b)} implies {\it (a)}
by 
\cite[Lemma~2.4 and 2.3]{TGKS-2019}.
The remaining part follows from
 Lemma~\ref{lem:comp_uK_uLM}.
\qed

\section{Exapmles}\label{sec:exls}

In Examples~\ref{ex:1} -- \ref{ex:2} we consider 
a L{\'e}vy process $Y=(Y_t)_{t\geq 0}$ in $\Rd$
with the generating triplet $(A,\LM,\drf)$, 
where $A=0$, $\drf\in\Rd$ and 
the L{\'e}vy measure $\LM(dx)=\gLM(x)dx$
satisfies for some $0<\lambda\leq \Lambda<\infty$
and all $x\in\Rd$,
$$
\lambda \uLM(|x|) \leq \gLM(x)\leq \Lambda \uLM(|x|)\,.
$$
We emphasise that $\LM(dx)$ may be non-symmetric.
The estimates we obtain are uniform for all $\LM(dx)$
whenever $\uLM$, $\lambda$ and $\Lambda$ are fixed.
First, we provide the Aronson-type estimates.
\begin{example}\label{ex:1}
Let $\uLM(r)=r^{-d-\alpha}+r^{-d-\beta}$, where 
$\alpha,\beta\in(0,2)$.
Then there is $c_0=c_0(d,\alpha,\beta,\lambda,\Lambda)$
such that for all $t>0$ and $x\in\Rd$,
$$
c_0^{-1}
f(t,x)  \leq  p(t,x+t\drf_{[t^{1/\alpha} \vee t^{1/\beta}]}) \leq c_0 f(t,x)\,,
$$
with
$$
f(t,x)= (t^{-d/\alpha}\land t^{-d/\beta})\land
\left( \frac{t}{|x|^{d+\alpha}}+\frac{t}{|x|^{d+\beta}}\right).
$$
Indeed, note that $\uh(r)= c_{d.\alpha} r^{-\alpha}+ c_{d.\beta} r^{-\beta}$
and $\uK(r)=(\alpha/2)c_{d.\alpha} r^{-\alpha}+ (\beta/2)c_{d.\beta} r^{-\beta}$.
Therefore
\eqref{e:assumption-j0},
\eqref{eq:wlsc:h} and~\eqref{eq:wusc:h} are satisfied with $\theta_{\uh}=\infty$,
and
$r^{-d}\uK(r)$ is comparable with $\uLM(r)$.
Since $\uh^{-1}(1/t)$ is comparable with 
$t^{1/\alpha}\vee t^{1/\beta}$, by Lemma~\ref{lem:small_drif} we have that $y=t\drf_{[t^{1/\alpha} \vee t^{1/\beta}]} -t \drf_{[\uh^{-1}(1/t)]}$ satisfies $|y|\leq a \uh^{-1}(1/t)$ with $a=a(d,\alpha,\beta)$.
Thus, 
by Theorem~\ref{p:upperestonp}
and Remark~\ref{rem:lower_bound_sharp}
for all $t>0$ and $x\in\Rd$ we have
$$
c_1^{-1} \rr_t(x)  \leq  p(t,x+y+t\drf_{[\uh^{-1}(1/t)]}) \leq 
c_1\rr_t(x)\,.
$$
Finally,
the comparability of $\rr_t(x)$ and $f(t,x)$ follows from that for
$\uK$ and $\uh$.
\end{example}

\begin{example}\label{ex:1b}
Let $0<\beta < 1 < \alpha<2$
in Example~\ref{ex:1}.
Then there is
$c_0=c_0(d,\alpha,\beta,\lambda,\Lambda)$
such that
for all $t>0$ and $x\in\Rd$,
$$
c_0^{-1}
f(t,x)  \leq  p(t,x+t\drf) \leq c_0 f(t,x)\,,
$$
Indeed,
in that case
$|\drf_r-\drf|\leq c (d,\alpha,\beta) \left( r^{1-\alpha} + r^{1-\beta}\right)$.
Since $r^{\alpha} \uh(r)\geq c_{d.\alpha}$ and $r^{\beta}\uh(r)\geq c_{d.\beta}$ we get
$[\uh^{-1}(1/t)]^{\alpha} t^{-1}\geq c_{d.\alpha}$ and
$[\uh^{-1}(1/t)]^{\beta} t^{-1}\geq c_{d.\beta}$. 
This
gives 
$
t |\drf_{[\uh^{-1}(1/t)]}-\drf|
\leq a \uh^{-1}(1/t)
$
for all $t>0$ with $a=a(d,\alpha,\beta)$,
and we can apply Theorem~\ref{p:upperestonp}
and Remark~\ref{rem:lower_bound_sharp} with $y=t(\drf -\drf_{[\uh^{-1}(1/t)]})$.
\end{example}

Now, we prove sharp estimates for $\uLM$ with 
very heavy tail.

\begin{example}\label{ex:2}
Let $\uLM(r)=r^{-d} [ \log (1+r^{\alpha/2})]^{-2}$, where  $\alpha\in (0,2)$.
Then
for every $T>0$
there is $c_0=c_0(d, \alpha, \lambda,\Lambda,T)$
such that
for all $0<t<T$ and $x\in\Rd$,
$$
c_0^{-1}
f(t,x)  \leq  p(t,x+t\drf_{[\uh^{-1}(1/t)]}) \leq c_0 f(t,x)\,,
$$
with
$$
f(t,x)= t^{-d/\alpha}  \land  \frac{t\, (\log(1+|x|^{\alpha/2}))^{-2}}{|x|^d}\,.
$$
In what follows the comparability constants depend only on $d, \alpha, \lambda,\Lambda,T$.
Since $\uLM(r) \approx r^{-d-\alpha} + r^{-d} (\log r)^{-2} \ind_{r\geq 2}$ we have
$\uh(r)\approx r^{-\alpha}$ for $r<\uh^{-1}(1/(aT))$,
$a=2 c_d (1\vee \lambda^{-1}\vee \Lambda )$.
Thus \eqref{eq:wlsc:h} holds and $\uh^{-1}(1/t)\approx t^{1/\alpha}$ for $t<T$.
We also have $c \lambda^{d+\alpha} \uLM(\lambda r)\leq \uLM(r)$ for all $\lambda \leq 1$, $r>0$.
The upper 
estimate
 follows now from Theorem~\ref{p:upperestonp}
and Lemma~\ref{lem:comp_uK_uLM}.
The lower estimate results from
Corollary~\ref{cor:low_sharp}.
\end{example}

In Examples~\ref{ex:3} -- \ref{ex:5}
we concentrate on
a L{\'e}vy process $Y=(Y_t)_{t\geq 0}$ in $\RR$
with the generating triplet $(A,\LM,\drf)$, 
where $A= 0$, $\drf\in\RR$ and 
the L{\'e}vy measure $\LM(dx)=\gLM(x)dx$
satisfies for all $x>0$,
$$
\gLM(-x) \approx \nu_-(-x)\,, \qquad
\gLM(x) \approx \nu_+(x)\,.
$$

\begin{example}\label{ex:3}
Let $\nu_-(r)=r^{-1-\alpha_-}$ and
$\nu_+(r)=r^{-1-\alpha_+}$, where
$\alpha_-, \alpha_+\in (0,2)$ and 
$\alpha_-\leq \alpha_+$. Then for every $T>0$ there exists a constant $c>0$ such that
for all $0<t<T$,
\begin{align*}
p(t,x+t \drf_{[1/\LCh^{-1}(1/t)]}) &\leq  c t^{-1/\alpha_+}\,,\qquad x\in\RR  \,,\\
p(t,x+t \drf_{[1/\LCh^{-1}(1/t)]}) &\leq  c \frac{ t}{x^{1+\alpha_+}}\,, \qquad x>0 \,,
\end{align*}
\begin{align*}
p(t,-x+t \drf_{[1/\LCh^{-1}(1/t)]})\leq c  \left(\frac{t}{x^{1+\alpha_-}}+ t^{-1/\alpha_+} e^{-c^{-1} t^{-1/\alpha_+} x \log(1+c^{-1} t^{-1/\alpha_+} x)} \right), \quad x>0\,.
\end{align*}
The first two inequalities are equivalent with
$
p(t,x+t \drf_{[1/\LCh^{-1}(1/t)]})\leq  c \min\{t^{-1/\alpha_+}, t x^{-1-\alpha_+}\}$
when restricted to  positive $x$,
and
$tx^{-1-\alpha_+} < t^{-1/\alpha_+}$ holds if and only if $x> t^{1/\alpha_+}$.
The third estimate may be simplified in certain regions. For instance,
$$
p(t,-x+t \drf_{[1/\LCh^{-1}(1/t)]})\leq  c \frac{t}{x^{1+\alpha_-}}\,, \quad\qquad x> t^{1/\alpha_+} \log(1+1/t)\,.
$$
On the other hand, there is $\theta_0>0$ such that for all $0<t<T$ and $x>\theta_0\, t^{1/\alpha_+}$,
\begin{align*}
p(t,-x+t \drf_{[1/\LCh^{-1}(1/t)]})\geq c^{-1} \frac{t}{x^{1+\alpha_-}} \,,
\qquad \qquad
p(t,x+t \drf_{[1/\LCh^{-1}(1/t)]})\geq c^{-1} \frac{t}{x^{1+\alpha_+}}\,.
\end{align*}

\noindent
We justify the estimates.
Note that 
$h(r)\approx r^{-\alpha_+}$, $r\leq 1$,
and thus by
\cite[Remark~3.2]{TGKS-2019}
the condition $\Cc$
 is satisfied with $\alpha_3=\alpha_+$, any fixed $T_3\in(0,\infty)$
and some $c_3\in(0,1]$,
see \cite[Remark~2.12]{TGKS-2019}.
In particular,
\cite[Theorem~3.1]{TGKS-2019}
guarantees $\Ca$  with $T_1=1/h(T_3)$, which is the first estimate.
Since 
by Lemma~\ref{lem:eqiv_h_LCh}
the inequality
\eqref{KP_as8} holds
with $T=1/(2h(T_3))$,
the assumptions of
Proposition~\ref{prop:one_side}
are satisfied with
 $f_+(x)=x^{-1-\alpha_+}$ and $\gamma_+=1$.
Further,
by
Lemma~\ref{lem:inverse_c}
and
\eqref{ineq:comp_TJ_inverse}
we have for all $t\in (0,T)$ and $x>0$,
\begin{align*}
\LCh^{-1}(1/t)x \approx 
t^{-1/\alpha_+} x 
=\left( t^{1+1/\alpha_+} x^{-1-\alpha_+}\right)^{-1/(1+\alpha_+)}
\approx \left( t \left[\LCh^{-1}(1/t)\right]^{-\gamma_+} f_+(x)\right)^{-1/(1+\alpha_+)}\,.
\end{align*}
Thus, if $t^{-1/\alpha_+}x \geq 1$
the exponential term in the conclusion of 
Proposition~\ref{prop:one_side}
is negligible. If $t^{-1/\alpha_+}x \leq 1$
we use the estimate by $t^{-1/\alpha_+}$, which finally gives
$
p(t,x+t \drf_{[1/\LCh^{-1}(1/t)]})\leq c  t^{-1/\alpha_+} \min\{1, t^{1+1/\alpha_+}x^{-1-\alpha_+} \}
$
for all $t\in (0,T)$ and $x>0$. 
Similarly, we use 
Corollary~\ref{cor:one_side} to obtain the third estimate.
Lower estimates follow from 
Proposition~\ref{prop:low_C3_R1}.
Note that $|t \drf_{[1/\LCh^{-1}(1/t)]}-t\drf_{[h^{-1}(1/t)]}| \leq c  h^{-1}(1/t)$ for $t\in (0,T)$, see
\eqref{ineq:comp_TJ_inverse} and \cite[(8)]{TGKS-2019}.
\end{example}

\begin{example}\label{ex:3b}
Let $\alpha_+\geq 1$ in Example~\ref{ex:3}.
Then for every $T >0$ there 
is a constant
$c>0$
such that
for all $0<t<T$, if $|x|\leq t^{1/\alpha_+}$, then
$$
p(t,x+t \drf_{[1/\LCh^{-1}(1/t)]})\geq c^{-1}\,  t^{-1/\alpha_+}\,,
$$
and if
$x\geq  t^{1/\alpha_+}$, then
\begin{align*}
p(t,-x+t \drf_{[1/\LCh^{-1}(1/t)]})\geq c^{-1} \frac{t}{x^{1+\alpha_-}} \,,
\qquad \qquad
p(t,x+t \drf_{[1/\LCh^{-1}(1/t)]})\geq c^{-1} \frac{t}{x^{1+\alpha_+}}\,.
\end{align*}

\noindent
Indeed, the first inequality follows from \cite[Theorem~5.3]{TGKS-2019}
with
$x$ replaced by $x+ t \drf_{[1/\LCh^{-1}(1/t)]}-t\drf_{[h^{-1}(1/t)]}$.
The next two are consequences of Corollary~\ref{cor:low_C3_scl1_R1}
 with $y= t \drf_{[1/\LCh^{-1}(1/t)]}-t\drf_{[h^{-1}(1/t)]}$.

\begin{table}[h!]
\begin{center}
\caption{
Estimates of $p(t,x+t \drf_{[1/\LCh^{-1}(1/t)]})$
for $0<t<T$ in Example~\ref{ex:3b};}
\begin{tabular}[t]{|c|c|c|c|c|}
\hline
 \makecell{range\\ of variables} & $x<-t^{1/\alpha_+} L(t)$  & $-t^{1/\alpha_+} L(t)<x<-t^{1/\alpha_+}$ & $|x|<t^{1/\alpha_+}$  & $x>t^{1/\alpha_+}$ \\ 
\hline\hline
\makecell{upper\\bound} & \multirow{2}{1cm}{$\dfrac{t}{|x|^{1+\alpha_-}}$} &
$\dfrac{t}{|x|^{1+\alpha_-}}+t^{-1/\alpha_+} \cdot e^{-\Theta(t,x)}$  &  \multirow{2}{1cm}{$t^{-1/\alpha_+}$} & \multirow{2}{1,5cm}{$ \dfrac{t}{|x|^{1+\alpha_+}}$} \\  
\cline{1-1} \cline{3-3}
 \makecell{lower\\bound}&  & $\dfrac{t}{|x|^{1+\alpha_-}}$ &  &  \\  
\hline 
\multicolumn{5}{l}{}\\
\multicolumn{5}{l}{\footnotesize{$L(t)=\log(1+1/t)$};\qquad
\footnotesize{$\Theta(t,x)= {c^{-1} t^{-1/\alpha_+} |x| \log(1+c^{-1} t^{-1/\alpha_+} |x|)}$;}}
\end{tabular}
\end{center}
\end{table}
\end{example}

\begin{example}\label{ex:4}
Let 
$\nu_-(r)=r^{-1-\alpha_-}e^{-c_-\,r }$
and
$\nu_+(r)=r^{-1-\alpha_+} e^{- c_+\,r}$, where 
$0<\alpha_-\leq \alpha_+<2$
and $c_-,c_+\geq 0$.
Then for every $T>0$ there exists a constant $c>0$ 
such that
for all $0<t<T$
all the upper estimates 
in Example~\ref{ex:3} are valid in the present example. Further, there is $x_0>0$ such that for all $0<t<T$ and $x>x_0$,
\begin{align*}
p(t,-x+t \drf_{[1/\LCh^{-1}(1/t)]})\leq  c \frac{t}{x^{1+\alpha_-}}\, e^{-c_-\,x}\,,
\qquad \quad
p(t,x+t \drf_{[1/\LCh^{-1}(1/t)]})
\leq  c \frac{t}{x^{1+\alpha_+}}\, e^{-c_+\,x}\,.
\end{align*}
On the other hand, there is $\theta_0>0$ such that for all $0<t<T$ and $x>\theta_0\, t^{1/\alpha_+}$,
\begin{align*}
p(t,-x+t \drf_{[1/\LCh^{-1}(1/t)]})\geq c^{-1} \frac{t}{x^{1+\alpha_-}} e^{-c_- x} \,,
\qquad \quad
p(t,x+t \drf_{[1/\LCh^{-1}(1/t)]})\geq c^{-1} \frac{t}{x^{1+\alpha_+}}e^{-c_+ x} \,.
\end{align*}

\noindent
We justify the estimates.
Since $h(r)\approx r^{-\alpha_+}$ for $r\leq 1$,
and $\nu_-(r)\leq r^{-1-\alpha_-}$, $\nu_+(r)\leq r^{-1-\alpha_+}$ for $r>0$, the upper estimates obtained in Example~\ref{ex:3} are valid in the current situation
by Proposition~\ref{prop:one_side}.
Refined upper bounds for $x>x_0$ follow from Theorem~\ref{thm:up_d1} and Remark~\ref{rem:x_neg1}, see Lemma~\ref{lem:f_aux}a).
Here we also use a general fact that
$t \drf_{[1/\LCh^{-1}(1/t)]}$
is bounded for $t\in (0,T)$.
Lower estimates follow from
Proposition~\ref{prop:low_C3_R1}.
Note that $|t \drf_{[1/\LCh^{-1}(1/t)]}-t\drf_{[h^{-1}(1/t)]}| \leq c  h^{-1}(1/t)$ for $t\in (0,T)$, see
\eqref{ineq:comp_TJ_inverse} and \cite[(8)]{TGKS-2019}.
\end{example}

\begin{example}\label{ex:4b}
Let $\alpha_+\geq 1$ in Example~\ref{ex:4}.
Then for every $T >0$ there 
is a constant
$c>0$
such that
for all $0<t<T$, if $|x|\leq t^{1/\alpha_+}$, then
$$
p(t,x+t \drf_{[1/\LCh^{-1}(1/t)]})\geq c^{-1}\,  t^{-1/\alpha_+}\,,
$$
and if
$x\geq  t^{1/\alpha_+}$, then
\begin{align*}
p(t,-x+t \drf_{[1/\LCh^{-1}(1/t)]})\geq c^{-1} \frac{t}{x^{1+\alpha_-}}e^{-c_- x} \,,
\qquad \quad
p(t,x+t \drf_{[1/\LCh^{-1}(1/t)]})\geq c^{-1} \frac{t}{x^{1+\alpha_+}}e^{-c_+ x}\,.
\end{align*}

\noindent
Indeed, the first inequality follows from \cite[Theorem~5.3]{TGKS-2019}
with
$x$ replaced by $x+ t \drf_{[1/\LCh^{-1}(1/t)]}-t\drf_{[h^{-1}(1/t)]}$.
The next two are consequences of Corollary~\ref{cor:low_C3_scl1_R1}
 with $y= t \drf_{[1/\LCh^{-1}(1/t)]}-t\drf_{[h^{-1}(1/t)]}$,
 see Remark~\ref{rem:x_neg2}.

\begin{table}[h!]
\begin{center}
\caption{
Estimates of $p(t,x+t \drf_{[1/\LCh^{-1}(1/t)]})$
for $0<t<T$ in Example~\ref{ex:4b};}
\begin{tabular}[t]{|c|c|c|c|c|}
\hline
 \makecell{range\\ of variables} & $x<-t^{1/\alpha_+} L(t)$  & $-t^{1/\alpha_+} L(t)<x<-t^{1/\alpha_+}$ & $|x|<t^{1/\alpha_+}$  & $x>t^{1/\alpha_+}$ \\ 
\hline\hline
\makecell{upper\\bound} & \multirow{2}{1cm}{$\dfrac{t e^{-c_-|x|}}{|x|^{1+\alpha_-}}$} &
$\dfrac{t}{|x|^{1+\alpha_-}}+t^{-1/\alpha_+} \cdot e^{-\Theta(t,x)}$  &  \multirow{2}{1cm}{$t^{-1/\alpha_+}$} & \multirow{2}{1,5cm}{$ \dfrac{te^{-c_+|x|}}{|x|^{1+\alpha_+}}$} \\  
\cline{1-1} \cline{3-3}
 \makecell{lower\\bound}&  & $\dfrac{t}{|x|^{1+\alpha_-}}$ &  &  \\  
\hline 
\multicolumn{5}{l}{}\\
\multicolumn{5}{l}{\footnotesize{$L(t)=\log(1+1/t)$};\qquad
\footnotesize{$\Theta(t,x)= {c^{-1} t^{-1/\alpha_+} |x| \log(1+c^{-1} t^{-1/\alpha_+} |x|)}$;}}
\end{tabular}
\end{center}
\end{table}
\end{example}

We note that we can take $\alpha_-=\alpha_+$ in Examples~\ref{ex:3} - \ref{ex:4b}, and then the exponential term including $\Theta(t,x)$ is redundant. What is more, in such case we can improve lower bounds obtained in Example~\ref{ex:4}, by applying 
\cite[Theorem~5.3]{TGKS-2019} and Proposition~\ref{prop:R1}, to get result like in Example~\ref{ex:4b}, but without any additional restriction on $\alpha_-=\alpha_+$.
\begin{example}\label{ex:5}
Let 
$\nu_-(r)=r^{-1-\alpha} e^{-c_-\,r }$
and
$\nu_+(r)=r^{-1-\alpha} e^{- c_+\,r}$, where 
$\alpha\in(0,2)$
and $c_-,c_+\geq 0$.
Then all estimates in Example~\ref{ex:4} and~\ref{ex:4b} with $\alpha_-=\alpha_+=\alpha$ are valid in the present setting.

We only need to justify lower bounds of Example~\ref{ex:4b}. To this end we use
\cite[Theorem~5.2]{TGKS-2019} with 
$x$ replaced by $x+ t \drf_{[1/\LCh^{-1}(1/t)]}-t\drf_{[h^{-1}(1/t)]}$,
and Proposition~\ref{prop:R1} (variant of Theorem~\ref{thm:low_C3_sym_Rd})
with
 $y= t \drf_{[1/\LCh^{-1}(1/t)]}-t\drf_{[h^{-1}(1/t)]}$.
In both results we take
 $\nu_s(dx)=|x|^{-1-\alpha} e^{-\max\{c_-,c_+\} |x|}dx$,
since then 
$\nu_s(dx)\leq c \LM(dx)$
and
for $|x|\geq 1$ we have
$
{\rm Re}[\LCh (x)]
\approx |x|^{\alpha} \approx {\rm Re}[\LCh_s (x)]
$.
See
\cite[Remark~3.2]{TGKS-2019}
and Remark~\ref{rem:x_neg2}.
\begin{table}[h!]
\begin{center}
\caption{
Estimates of $p(t,x+t \drf_{[1/\LCh^{-1}(1/t)]})$ for $0<t<T$ in Example~\ref{ex:5};}
\begin{tabular}[t]{|c|c|c|c|}
\hline
 \makecell{range\\ of variables} & $x<-t^{1/\alpha} $  &   $|x|<t^{1/\alpha}$  & $x>t^{1/\alpha}$ \\ 
\hline\hline
\makecell{upper\\bound} & \multirow{2}{1cm}{$\dfrac{t e^{-c_-|x|}}{|x|^{1+\alpha}}$} 
 &  \multirow{2}{1cm}{$t^{-1/\alpha}$} & \multirow{2}{1,5cm}{$ \dfrac{te^{-c_+|x|}}{|x|^{1+\alpha}}$} \\  
\cline{1-1}
 \makecell{lower\\bound}&  &   &  \\  
\hline 
\end{tabular}
\end{center}
\end{table}
\end{example}

\section{Appendix - unimodal L{\'e}vy processes}\label{sec:uni_app}

In this section
we
discuss
a L{\'e}vy process $X$
with a generating triplet $(0,\uLM,0)$, where 
$\uLM(dx)=\uLM(|x|)dx$
for a non-increasing (profile) function $\uLM\colon [0,\infty)\to [0,\infty]$.
Equivalently,
$X$ is
a pure-jump
isotropic unimodal L{\'e}vy process (see \cite{MR3165234}, \cite{MR705619}).
The characteristic exponent $\uLCh$ of $X$ 
takes the form 
$$\uLCh(x)={\rm Re} [\uLCh(x)]=\int_{\Rd}\big( 1-\cos\left<x,z\right> \big)\uLM(|z|)dz\,,$$
and
satisfies (\cite[Proposition~2]{MR3165234})
\begin{align}\label{ineq:comp_unimod}
(1/\pi^2) \uLCh^*(|x|) \leq \uLCh(x) \leq \uLCh^*(|x|)\,,\qquad x\in\Rd\,.
\end{align}
For $r>0$ we define 
$$
\uh(r)
=\int_{\Rd}\left(1\wedge\frac{|x|^2}{r^2}\right)\uLM(|x|)dx\,,
\qquad
\uK(r)=r^{-2}\int_{|x|<r}|x|^2\uLM(|x|)dx\,.
$$ 
In what follows {\bf we assume that $h_0(0^+)=\uLM(\Rd)=\infty$}, i.e., $X$ is not a compound Poisson process.
First we complement the properties
of $\uK$ and $\uh$
gathered in 
\cite[Section~2]{TGKS-2019}.

\begin{lemma}\label{lem:basic_prop_K_h+}
We have
\begin{enumerate}
\item[\rm 1.] $\uK$ is continuous,
\item[\rm 2.] $r^{-d}\uK(r)$ is strictly decreasing,
\item[\rm 3.] $\uK(r)\leq \lambda^{-d}K_0(\lambda r)$, $\lambda\leq 1$, $r>0$,
\item[\rm 4.] $\uLM(r)\leq (\omega_d/(d+2))^{-1}\, r^{-d}\uK(r)$.
\end{enumerate}
\end{lemma}

 \begin{lemma}\label{lem:int_J}
Let $\uh$ satisfy \eqref{eq:wlsc:h} with $\ulah>1$, then
\begin{align*}
\int_{r \leq |z|<  \theta_{\uh} }  |z|\uLM(|z|)dz \leq \frac{(d+2) C_{\uh}}{\ulah-1} \, r \uh(r)\,, \qquad r>0\,.
\end{align*}
Let $\uh$ satisfy \eqref{eq:wusc:h} with $\beta_{\uh}<1$, then
\begin{align*}
\int_{|z|< r} |z| \uLM(|z|)dz\leq \frac{d+2}{c_{\uh}(1-\beta_{\uh})}\, r \uh(r)\,,\qquad r<\theta_{\uh}\,.
\end{align*}
\end{lemma}
\begin{proof}
Proofs 
are based on the properties of $\uLM$, $\uK$, $\uh$
and on scaling assumptions.
\end{proof}

The equivalence
of conditions presented in the next lemma
 can be derived from
the observations collected in
\cite[Appendix A]{Grzywny2017},
which rely on results of \cite{MR0466438}.
We give a direct short proof and we
mark down relations between parameters.

\begin{lemma}\label{lem:comp_uK_uLM}
The following are equivalent.
\begin{enumerate}
\item[$\Aaa$] There are $T_1\in (0,\infty]$, $c_1>0 $ such that for all $r<T_1$,
\begin{align*}
c_1 r^{-d}\uK(r) \leq  \uLM(r)\,.
\end{align*}
\item[$\Abb$] There are $T_2\in(0,\infty]$, $c_2\in (0,1]$ and $\beta_2\in (0,2)$ such that
for all $\lambda \leq 1$ and $r<T_2$,
\begin{align*}
c_2 \lambda^{\beta_2} \uK(\lambda r) \leq \uK(r)\,.
\end{align*}
\item[$\Acc$] There are $T_3\in (0,\infty]$, $c_3\in (0,1]$ and $\beta_3 \in [0,2)$ such that
for all $\lambda\leq 1$ and $r<T_3$,
$$
c_3 \lambda^{d+\beta_3} \uLM(\lambda r) \leq \uLM(r)\,.
$$
\end{enumerate}
Moreover, $\Aaa$ implies $\Abb$ with
$T_2=T_1$, $c_2=1$ and $\beta_2=\beta_2(d,c_1)$.
From $\Aaa$
we get $\Acc$ with
$T_3=T_1$, $c_3=c_3(d,c_1)$ and $\beta_3=\beta_3(d,c_1)$.
The condition $\Abb$ gives $\Aaa$ with $T_1=(c_2/2)^{1/(2-\beta_2)} T_2$ and $c_1=c_1(d,c_2,\beta_2)$. From $\Acc$ we have $\Aaa$ with $T_1=T_3$ and $c_1=c_1(d,c_3,\beta_3)$.
\end{lemma}
\pf
Define $f(r)= r^2\uK(r)$, $r>0$. For all $0\leq a<b<\infty$ we have 
$$
f(b)-f(a)= \omega_d \int_a^b r^{d+2} \uLM(r)\frac{dr}{r}\,.
$$
If $\Aaa$
holds, then we get for $0<r_1<r_2<T_1$,
$$
f(r_2)-f(r_1)\geq c_1 \omega_d \int_{r_1}^{r_2} f(s)s^{-1}ds\,,
$$
which implies (since $f$ is continuous and positive) that $f(r)r^{-c_1 \omega_d}$ is non-decreasing for $r<T_1$. Thus 
$\Abb$
holds, as well as $\Acc$.
Assume $\Abb$. Then $c_2 f(\lambda r)\leq \lambda^{2-\beta_2} f(r)$, and therefore
 $f(\lambda_0 r)\leq (1/2) f(r)$ for $r<T_2$ and $\lambda_0=(c_2/2)^{1/(2-\beta_2)}$.
Further, for $r<T_2$,
\begin{align*}
\frac{f(\lambda_0 r)}{2}\leq \frac{f(r)}{2} \leq f(r)-f(\lambda_0 r)
=\omega_d \int_{\lambda_0 r}^r s^{d+1}\uLM(s)ds
\leq \omega_d \left( \frac{\lambda_0^{-d-2}-1}{d+2}\right)   (\lambda_0 r)^{d+2} \uLM(\lambda_0 r) \,.
\end{align*}
That gives $c_1 (\lambda_0r)^{-d} \uK(\lambda_0 r) \leq \uLM(\lambda_0 r)$ for $r<T_2$
and proves $\Aaa$.
Assume $\Acc$. Then for $r<T_3$,
\begin{align*}
r^2 \uK(r)=\omega_d \int_0^r s^{d+1} \uLM(s) ds
\leq (\omega_d/c_3) \int_0^r s^{d+1} (s/r)^{-d-\beta_3} \uLM(r) ds
= \frac{\omega_d}{c_3(2-\beta_3)} r^{d+2} \uLM(r)\,.
\end{align*}
This guarantees $\Aaa$ and ends the proof.
\qed

We 
demonstrate
fundamental
 properties of the bound function~\eqref{def:bound_function}.

\begin{lemma}\label{lem:integr_rr}
We have
$$(\omega_d/2)\leq \int_{\Rd}\rr_t(x)\,dx \leq (\omega_d/2)(1+2/d),\qquad t>0\,.$$
\end{lemma}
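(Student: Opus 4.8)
The plan is to set $\rho:=\uh^{-1}(1/t)$ (an honest inverse, since $\uh$ is continuous, strictly decreasing with $\uh(0^+)=\infty$ and $\uh(\infty)=0$, so that $t\,\uh(\rho)=1$) and to split the integral over the ball $B(0,\rho)$ and its complement. On $B(0,\rho)$ I would use only the trivial pointwise bound $\rr_t(x)\le\big[\uh^{-1}(1/t)\big]^{-d}=\rho^{-d}$, which is immediate from the definition~\eqref{def:bound_function}; this gives
$$
\int_{|x|<\rho}\rr_t(x)\,dx\le\rho^{-d}\,\big|B(0,\rho)\big|=\rho^{-d}\cdot\frac{\omega_d\rho^{d}}{d}=\frac{\omega_d}{d}.
$$
The complement will be handled by an \emph{exact} computation, and that piece alone will already produce the lower bound.

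The key step is to observe that on $\{|x|\ge\rho\}$ the minimum in~\eqref{def:bound_function} is always attained by the jump term. Indeed $r\mapsto r^{-d}\uK(r)$ is non-increasing (Lemma~\ref{lem:basic_prop_K_h+}) and $\uK(\rho)\le\uh(\rho)$ (Lemma~\ref{lem:basic_prop_K_h}, item~6, or directly from the definitions of $\uK$ and $\uh$), so for $|x|\ge\rho$,
$$
\frac{t\,\uK(|x|)}{|x|^{d}}\le\frac{t\,\uK(\rho)}{\rho^{d}}\le\frac{t\,\uh(\rho)}{\rho^{d}}=\frac{1}{\rho^{d}}=\big[\uh^{-1}(1/t)\big]^{-d}.
$$
Hence $\rr_t(x)=t\,\uK(|x|)\,|x|^{-d}$ there, and passing to polar coordinates, $\int_{|x|\ge\rho}\rr_t(x)\,dx=t\,\omega_d\int_\rho^{\infty}\uK(r)\,r^{-1}\,dr$. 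Now I would apply Lemma~\ref{rem:int_K_is_h} to the generating triplet $(0,\uLM,0)$ with $a=\rho$ and $b=\infty$ (recall $\uh(\infty)=0$) to get $\int_\rho^{\infty}2\uK(r)\,r^{-1}\,dr=\uh(\rho)$, so that $\int_{|x|\ge\rho}\rr_t(x)\,dx=\tfrac12\,t\,\omega_d\,\uh(\rho)=\omega_d/2$.

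Putting the two pieces together gives
$$
\frac{\omega_d}{2}=\int_{|x|\ge\rho}\rr_t(x)\,dx\le\int_{\Rd}\rr_t(x)\,dx\le\frac{\omega_d}{d}+\frac{\omega_d}{2}=\frac{\omega_d}{2}\Big(1+\frac{2}{d}\Big),
$$
which is exactly the claim. I do not anticipate a genuine obstacle: the whole argument rests on the single observation that $\rr_t$ coincides with its jump term on $\{|x|\ge\rho\}$, which is what allows the outer integral to be evaluated exactly via the $\uh$--$\uK$ identity of Lemma~\ref{rem:int_K_is_h}; the inner part is a one-line volume estimate.
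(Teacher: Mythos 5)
Your proof is correct and follows essentially the same route as the paper: bound $\rr_t$ by $[\uh^{-1}(1/t)]^{-d}$ on the ball of radius $\uh^{-1}(1/t)$ (volume $\omega_d/d$ after normalization), note that outside this ball the minimum is attained by the jump term, and evaluate that piece exactly as $\omega_d/(2t)\cdot t$ via Lemma~\ref{rem:int_K_is_h}. The observation that $\rr_t$ \emph{equals} its jump term on the complement, which yields the lower bound, is precisely the paper's argument as well.
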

\pf
Note that
$$\int_{|x|\leq \uh^{-1}(1/t)}\big[\uh^{-1}(1/t) \big]^{-d} dx=\frac{\omega_d}{d}\,,$$
and by \cite[Lemma~2.2]{TGKS-2019},
$$
\int_{|x|>\uh^{-1}(1/t)}\uK(|x|)|x|^{-d}dx
=\frac{\omega_d}{2}\, \frac{1}{t}.
$$
The lower bound is a consequence of  
$\uK(r)\leq \uh(r)$, which implies
$t \uK(|x_0|)|x_0|^{-d}\leq [\uh^{-1}(1/t)]^{-d}$ for $|x_0|= \uh^{-1}(1/t)$,
and the monotonicity of  $r^{-d} \uK(r)$.
\qed

\begin{lemma}\label{lem:sol:rho}
Fix $t>0$. There is a unique solution $r_0>0$ of
$$t\uK(r)r^{-d}=  [\uh^{-1}(1/t)]^{-d}=\rr_t(r)\,,$$ 
and  $r_0\in [\uh^{-1}(3/t),\uh^{-1}(1/t)]$.
\end{lemma}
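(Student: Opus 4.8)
The plan is to reduce everything to the one-variable function $\phi(r):=t\,\uK(r)\,r^{-d}$, $r>0$. By Lemma~\ref{lem:basic_prop_K_h+} the map $r\mapsto r^{-d}\uK(r)$ is strictly decreasing and $\uK$ is continuous, so $\phi$ is continuous and strictly decreasing on $(0,\infty)$; in particular any solution of $\phi(r)=[\uh^{-1}(1/t)]^{-d}$ is automatically unique, and the identity $\rr_t(r_0)=[\uh^{-1}(1/t)]^{-d}$ then holds by the very definition of $\rr_t$ (at $r_0$ the two terms of the minimum agree). It therefore remains only to exhibit a solution and to check that it sits in the stated interval. Write $s:=\uh^{-1}(1/t)$ and $r_*:=\uh^{-1}(3/t)$; since $\uh$ is continuous, strictly decreasing and $\uh(0^+)=\infty$ (Lemma~\ref{lem:basic_prop_K_h}), both are well defined and $0<r_*<s$.

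First I would handle the right endpoint, which is immediate: from the definitions $\uK(r)\leq\uh(r)$ for every $r>0$, so $\phi(s)=t\,\uK(s)\,s^{-d}\leq t\,\uh(s)\,s^{-d}=s^{-d}$. Hence, by continuity of $\phi$ and the intermediate value theorem, it is enough to prove the reverse inequality at $r_*$, namely $\phi(r_*)\geq s^{-d}$, equivalently $t\,\uK(r_*)\geq(r_*/s)^{d}$; the root then lies in $[r_*,s]$ and is the unique root on $(0,\infty)$ by strict monotonicity.

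For the lower bound at $r_*$ I would invoke Lemma~\ref{rem:int_K_is_h}, which gives $\uh(r_*)-\uh(s)=\int_{r_*}^{s}2\uK(u)\,u^{-1}\,du$, i.e. $2/t=\int_{r_*}^{s}2\uK(u)\,u^{-1}\,du$. On $[r_*,s]$ the monotonicity of $r\mapsto r^{-d}\uK(r)$ yields $\uK(u)\leq(u/r_*)^{d}\uK(r_*)$, and inserting this and integrating gives $2/t\leq(2/d)\,\uK(r_*)\big((s/r_*)^{d}-1\big)$, hence $t\,\uK(r_*)\geq d\big/\big((s/r_*)^{d}-1\big)$. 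Since $d\geq1>1-(r_*/s)^{d}$, the right-hand side is at least $(r_*/s)^{d}$, which is exactly what was needed. (The numerical constant $3$ is calibrated for $d=1$; for general $d$ any constant $\geq 1+2/d$ in place of $3$ works in the same way.)

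The only genuine obstacle is this last estimate --- producing a lower bound for $\uK$ at a single radius out of the \emph{values} of $\uh$. The trick is to feed the integral identity of Lemma~\ref{rem:int_K_is_h} into the right one of the two scaling properties of $\uK$: using that $r^{-d}\uK(r)$ is decreasing (rather than that $r^{2}\uK(r)$ is increasing) keeps the argument of $\uK$ frozen at $r_*$ and turns the integral into an elementary power, after which the desired inequality collapses to the trivial $d\geq1$. Everything else --- continuity, strict monotonicity, and the intermediate value step --- is routine.
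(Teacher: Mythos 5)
Your proof is correct. It differs from the paper's in how both existence and the left endpoint are handled. The paper establishes existence of a root of $\phi(r)=t\uK(r)r^{-d}$ globally, from $\lim_{r\to 0^+}r^{-d}\uK(r)=\infty$ (which uses the standing assumption $\uLM(\Rd)=\infty$) and $\lim_{r\to\infty}r^{-d}\uK(r)=0$, and then obtains $r_0\geq \uh^{-1}(3/t)$ by integrating the bound function over all of $\Rd$: it writes $\frac{\omega_d}{2}\,t\,\uh(r_0)=\int_{|z|\geq r_0}t\uK(|z|)|z|^{-d}\,dz\leq\int_{\Rd}\rr_t(z)\,dz\leq\frac{\omega_d}{2}(1+2/d)$ via Lemma~\ref{lem:integr_rr}, whence $\uh(r_0)\leq 3/t$. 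You instead localize everything to the interval $[\uh^{-1}(3/t),\uh^{-1}(1/t)]$: the right endpoint inequality $\phi(s)\leq s^{-d}$ is the same as the paper's $\uK\leq\uh$ step, but at the left endpoint you apply Lemma~\ref{rem:int_K_is_h} directly on $[r_*,s]$ together with the decay of $r^{-d}\uK(r)$ to get $t\uK(r_*)\geq d/\bigl((s/r_*)^d-1\bigr)\geq (r_*/s)^d$, and then conclude by the intermediate value theorem. The two arguments use the same essential ingredients (the integral identity relating $\uK$ and $\uh$, and the monotonicity of $r^{-d}\uK(r)$), but yours bypasses Lemma~\ref{lem:integr_rr} entirely and also does not need the limit of $r^{-d}\uK(r)$ at the origin, since existence comes for free from the two endpoint evaluations; the small price is the elementary verification $d\geq 1-(r_*/s)^d$, which you carry out correctly, and your closing remark that any constant $\geq 1+2/d$ would do matches the constant the paper extracts from Lemma~\ref{lem:integr_rr}.
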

\pf
Since $\uLM$ is unbounded we get
$\lim_{r\to 0^+}\uK(r)r^{-d}=\infty$. Moreover, $\lim_{r\to\infty}\uK(r)r^{-d}=0$. Thus by continuity and monotonicity of 
$\uK(r)r^{-d}$
 there exists a unique solution.
 Next, since $\uK(r)\leq \uh(r)$, we have 
$r_0\leq \uh^{-1}(1/t)$. To prove the lower bound we use Lemma \ref{lem:integr_rr} to obtain
\begin{align*}
\frac{\omega _d}{2} \uh(r_0) t &=\int_{|z|\geq r_0}\frac{t \,\uK(|z|)}{|z|^d}dz\leq\int_{\Rd} \rr_t(z) dz  \leq  \frac{\omega_d}{2} \frac{d+2}{d}\,,
\end{align*}
which gives $\uh(r_0)\leq (1+2/d)/t\leq 3/t$. Therefore $r_0\geq \uh^{-1}(3/t)$. 
\qed

\begin{proposition}\label{prop:small_shift}
Let $a\geq 1$. There is $c=c(d,a)$ such that
for all $t>0$,
\begin{align*}
\rr_t(x+z)\leq c\, \rr_t(x)\,, \qquad\qquad \mbox{if} \qquad |z|\leq \left[a\,\uh^{-1}(3/t)\right] \vee \frac{|x|}{2}\,.
\end{align*}
\end{proposition}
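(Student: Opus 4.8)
The plan is to reduce everything to elementary one-dimensional facts about the profile of $\rr_t$. Write $\rr_t(x)=\phi(|x|)$, where $\phi(s)=[\uh^{-1}(1/t)]^{-d}\land t\uK(s)s^{-d}$. By Lemma~\ref{lem:basic_prop_K_h+} the map $s\mapsto t\uK(s)s^{-d}$ is strictly decreasing, so $\phi$ is non-increasing; and by Lemma~\ref{lem:sol:rho} there is $r_0\in[\uh^{-1}(3/t),\uh^{-1}(1/t)]$ with $t\uK(r_0)r_0^{-d}=[\uh^{-1}(1/t)]^{-d}$, hence $\phi\equiv[\uh^{-1}(1/t)]^{-d}$ on $[0,r_0]$ while $\phi(s)=t\uK(s)s^{-d}$ for $s\ge r_0$. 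Note also $\phi(s)\le[\uh^{-1}(1/t)]^{-d}$ for every $s$.

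First I would prove a \emph{$t$-uniform doubling bound}: $\phi(s/2)\le 2^{d+2}\phi(s)$ for all $s>0$. If $s>2r_0$, then $s$ and $s/2$ both exceed $r_0$, and since $r^2\uK(r)$ is non-decreasing (Lemma~\ref{lem:basic_prop_K_h}) one has $(s/2)^2\uK(s/2)\le s^2\uK(s)$, i.e. $\uK(s/2)\le 4\uK(s)$, which gives $\phi(s/2)=2^d t\uK(s/2)s^{-d}\le 2^{d+2}\phi(s)$. If $s\le 2r_0$, then $\phi(s/2)=[\uh^{-1}(1/t)]^{-d}$; for $s\le r_0$ this equals $\phi(s)$, and for $r_0<s\le 2r_0$ the same monotonicity gives $\uK(s)s^{-d}\ge(r_0/s)^{d+2}\uK(r_0)r_0^{-d}\ge 2^{-(d+2)}\uK(r_0)r_0^{-d}$, so $\phi(s)\ge 2^{-(d+2)}[\uh^{-1}(1/t)]^{-d}$. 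The very same computation, run up to radius $2a\,\uh^{-1}(3/t)\le 2a r_0$ rather than $2r_0$, yields the companion \emph{lower bound} $\phi(s)\ge(2a)^{-(d+2)}[\uh^{-1}(1/t)]^{-d}$ for all $s\le 2a\,\uh^{-1}(3/t)$.

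With these two facts the proposition splits along the $\vee$ in the hypothesis. If $|z|\le|x|/2$, then $|x+z|\ge|x|/2$, so monotonicity of $\phi$ and the doubling bound give $\rr_t(x+z)=\phi(|x+z|)\le\phi(|x|/2)\le 2^{d+2}\phi(|x|)=2^{d+2}\rr_t(x)$. If instead $|x|/2<|z|\le a\,\uh^{-1}(3/t)$, then $|x|<2|z|\le 2a\,\uh^{-1}(3/t)$, so $\rr_t(x+z)\le[\uh^{-1}(1/t)]^{-d}$ trivially, while the lower bound of the previous paragraph applies at $|x|$ and gives $\rr_t(x)\ge(2a)^{-(d+2)}[\uh^{-1}(1/t)]^{-d}$; hence $\rr_t(x+z)\le(2a)^{d+2}\rr_t(x)$. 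Choosing $c=(2a)^{d+2}$, which dominates $2^{d+2}$ because $a\ge1$, completes the argument.

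The step I expect to require the most care is the realization that one cannot compare $\uh^{-1}(3/t)$ and $\uh^{-1}(1/t)$ directly — no scaling condition on $\uh$ is in force in this appendix — so every estimate must be funnelled through $r_0$ and through the universal monotonicities of $r^2\uK(r)$ and $r^{-d}\uK(r)$; once this is accepted, the proof is purely mechanical and the constant comes out as $(2a)^{d+2}$.
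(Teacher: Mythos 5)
Your proof is correct, and its skeleton is the same as the paper's: split along the $\vee$ in the hypothesis, handle $|z|\leq|x|/2$ by the monotonicity of $r^{-d}\uK(r)$ together with the doubling $\uK(r/2)\leq 4\uK(r)$, and handle the remaining case (where necessarily $|x|<2|z|\leq 2a\,\uh^{-1}(3/t)$) by exploiting Lemma~\ref{lem:sol:rho} to know that $\rr_t$ is still comparable to its maximal value $[\uh^{-1}(1/t)]^{-d}$ at such radii. The one point where you genuinely diverge is the treatment of $a>1$: the paper first proves the statement for $a=1$ with constant $2^{d+2}$ and then iterates the shift in $\lceil a\rceil$ steps of length at most $\uh^{-1}(3/t)$, obtaining $c=(2^{d+2})^{\lceil a\rceil}$, whereas you prove in one stroke the lower bound $\rr_t(x)\geq(2a)^{-(d+2)}[\uh^{-1}(1/t)]^{-d}$ for $|x|\leq 2a\,\uh^{-1}(3/t)$ by running the same $r^2\uK(r)$-monotonicity argument out to radius $2a r_0$. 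Your route is slightly cleaner and yields a constant polynomial in $a$ rather than exponential; both are equally valid since the proposition only asserts $c=c(d,a)$, and your remark that no scaling hypothesis on $\uh$ is available here -- so that everything must pass through $r_0$ and the universal monotonicities -- is exactly the right observation.
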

\pf
If $|z|\leq |x|/2$, then $|x+z|\geq |x|/2$. 
By monotonicity of $\uK(r)r^{-d}$
and scaling of $\uK$,
\begin{align*}
\rr_t(x+z)\leq 
\left[\uh^{-1}(1/t)\right]^{-d}\land \big(t \uK(|x|/2)(|x|/2)^{-d} \big)
\leq 2^{d+2} \rr_t(x)\,.
\end{align*}
Now we prove the second part.  Let $a=1$.
The condition $|z|\leq \uh^{-1}(3/t)$ implies 
$\rr_t(x+z) \leq \rr_t(z)=[ \uh^{-1}(1/t)]^{-d}$
(see Lemma~\ref{lem:sol:rho}).
Again, monotonicity of $\uK(r)r^{-d}$ and scaling of $\uK$ imply for  $2|z|>|x|$
that
$\rr_t(z)\leq 2^{d+2}\rr_t(2z) \leq 2^{d+2} \rr_t(x)$.
Hence for $x\in\Rd$ and $|z|\leq h^{-1}(3/t)$ we obtain
$$\rr_t(x+z)\leq 2^{d+2}\rr_t(x)\,.$$
Finally, to cover the case $a> 1$ we let $n\in\mathbb{N}$. Then for any $|z|\leq n \uh^{-1}(3/t)$ we have
$$
\rr_t(x+z)=\rr_t(x+[(n-1)/n ]z+ z/n) \leq (2^{d+2})^n \rr_t(x)\,.
$$
\qed

\begin{corollary}\label{cor:small_shift}
Let $\uh$ satisfy \eqref{eq:wlsc:h}. For every $a\geq 1$ 
there is $c=c(d,a,\ulah,C_{\uh})$ such that
\begin{align*}
\rr_t(x+z)\leq c\, \rr_t(x)\,, \qquad\qquad \mbox{if} \qquad |z|\leq \left[a\,\uh^{-1}(1/t)\right] \vee \frac{|x|}{2}\quad \mbox{and}\quad t<1/\uh(\theta_{\uh})\,.
\end{align*}
\end{corollary}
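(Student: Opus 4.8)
The plan is to reduce Corollary~\ref{cor:small_shift} directly to Proposition~\ref{prop:small_shift}. The only discrepancy between the two statements is that the admissible shift in the Proposition is measured against $\uh^{-1}(3/t)$, whereas here we allow the larger quantity $\uh^{-1}(1/t)$; the scaling hypothesis \eqref{eq:wlsc:h} is precisely what closes this gap, and the restriction $t<1/\uh(\theta_{\uh})$ is exactly what makes the scaling applicable at $u=1/t$.

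First I would observe that \eqref{eq:wlsc:h} is condition $\Aa$ for $\uh$ with exponent $\ulah$, constant $C_{\uh}$ and threshold $\theta_{\uh}$, so by Lemma~\ref{lem:equiv_scal_h} the equivalent form $\Ab$ holds:
\begin{align*}
\uh^{-1}(u)\leq (C_{\uh}\lambda)^{1/\ulah}\,\uh^{-1}(\lambda u)\,,\qquad \lambda\geq 1,\ u>\uh(\theta_{\uh})\,.
\end{align*}
Applying this with $u=1/t$ (legitimate precisely because $t<1/\uh(\theta_{\uh})$) and $\lambda=3$ gives
\begin{align*}
\uh^{-1}(1/t)\leq (3C_{\uh})^{1/\ulah}\,\uh^{-1}(3/t)\,.
\end{align*}

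Then I would set $a':=a\,(3C_{\uh})^{1/\ulah}$, which satisfies $a'\geq 1$ since $a\geq 1$ and $C_{\uh}\geq 1$. For any $z$ with $|z|\leq [a\,\uh^{-1}(1/t)]\vee(|x|/2)$, the last display yields $|z|\leq [a'\,\uh^{-1}(3/t)]\vee(|x|/2)$, so Proposition~\ref{prop:small_shift} applied with the constant $a'$ in place of $a$ gives $\rr_t(x+z)\leq c(d,a')\,\rr_t(x)$. Since $a'$ depends only on $a$, $\ulah$ and $C_{\uh}$, the constant $c(d,a')$ depends only on $d,a,\ulah,C_{\uh}$, as claimed.

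There is no real obstacle here: the argument is a one-line scaling manipulation feeding into the already-established Proposition. The only points requiring minor care are to verify that the parameter range $t<1/\uh(\theta_{\uh})$ matches the hypothesis $u>\uh(\theta_{\uh})$ needed to invoke $\Ab$, and to confirm that every constant introduced along the way stays within the list $d,a,\ulah,C_{\uh}$.
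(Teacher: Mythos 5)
Your reduction is correct and is exactly the intended derivation: the paper states the corollary without proof precisely because it follows from Proposition~\ref{prop:small_shift} by converting $\uh^{-1}(1/t)$ into a constant multiple of $\uh^{-1}(3/t)$ via the $\Aa\Leftrightarrow\Ab$ equivalence of Lemma~\ref{lem:equiv_scal_h}, with the hypothesis $t<1/\uh(\theta_{\uh})$ serving to put $u=1/t$ in the admissible range. The constant bookkeeping ($a'=a(3C_{\uh})^{1/\ulah}$, hence $c=c(d,a,\ulah,C_{\uh})$) is also right.
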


\begin{corollary}\label{cor:por_0}
Let $\uh$ satisfy \eqref{eq:wlsc:h}. For $t>0$, $x\in\Rd$ define 
$$
\varphi_t(x)=
\begin{cases}
[\uh^{-1}(1/t)]^{-d},\qquad &|x|\leq \uh^{-1}(1/t)\,,\\
t\uK(|x|)|x|^{-d}, & |x|>  \uh^{-1}(1/t)\,.
\end{cases}
$$
Then $\rr_t(x)\leq \varphi_t(x) \leq c\, \rr_t(x)$ for all $t<1/\uh(\theta_{\uh})$, $x\in\Rd$ and a constant $c=c(\ulah,C_{\uh})$.
\end{corollary}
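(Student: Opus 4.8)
The plan is to prove the trivial inequality first and then establish $\varphi_t(x)\le c\,\rr_t(x)$ by splitting on the two regions that define $\varphi_t$. The bound $\rr_t(x)\le\varphi_t(x)$ is immediate: on each region $\varphi_t(x)$ equals one of the two quantities whose minimum is $\rr_t(x)$.

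For the region $|x|>\uh^{-1}(1/t)$, where $\varphi_t(x)=t\,\uK(|x|)|x|^{-d}$, I would show that the minimum in \eqref{def:bound_function} is attained by the second term, so that $\rr_t(x)=\varphi_t(x)$. This uses only $\uK(r)\le\uh(r)$ (immediate from the definitions, since $1\land(|z|^2/r^2)\ge(|z|^2/r^2)\ind_{|z|<r}$) and that $r\mapsto r^{-d}\uK(r)$ is non-increasing by Lemma~\ref{lem:basic_prop_K_h+}; together they yield
\[
\frac{t\,\uK(|x|)}{|x|^{d}}\le t\,\frac{\uK(\uh^{-1}(1/t))}{[\uh^{-1}(1/t)]^{d}}\le t\,\frac{\uh(\uh^{-1}(1/t))}{[\uh^{-1}(1/t)]^{d}}=[\uh^{-1}(1/t)]^{-d},
\]
so indeed $\rr_t(x)=t\,\uK(|x|)|x|^{-d}=\varphi_t(x)$; no scaling and no bound on $t$ are needed here.

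For the region $|x|\le\uh^{-1}(1/t)$, where $\varphi_t(x)=[\uh^{-1}(1/t)]^{-d}\ge\rr_t(x)$, it remains to show $[\uh^{-1}(1/t)]^{-d}\le c\,t\,\uK(|x|)|x|^{-d}$ with $c=c(\ulah,C_{\uh})$. The hypothesis $t<1/\uh(\theta_{\uh})$ and strict monotonicity of $\uh$ give $\uh^{-1}(1/t)<\theta_{\uh}$, hence $|x|<\theta_{\uh}$, so \eqref{eq:wlsc:h} together with Lemma~\ref{lem:equiv_scal_h} (condition $\Ac$ for the pair $\uh,\uK$, whose proof through Lemma~\ref{rem:int_K_is_h} carries over verbatim) yields $\uh(|x|)\le c_0\,\uK(|x|)$ with $c_0=c_0(\ulah,C_{\uh})$. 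Since $\uh$ is decreasing and $|x|\le\uh^{-1}(1/t)$ we have $\uh(|x|)\ge 1/t$, whence $t\,\uK(|x|)\ge c_0^{-1}$; combining with $|x|^{d}\le[\uh^{-1}(1/t)]^{d}$ gives $t\,\uK(|x|)|x|^{-d}\ge c_0^{-1}[\uh^{-1}(1/t)]^{-d}$, which is the claim with $c=c_0\vee 1$. The only genuinely delicate point is making the scaling \eqref{eq:wlsc:h} available at scale $|x|<\theta_{\uh}$, which is exactly what $t<1/\uh(\theta_{\uh})$ provides; everything else is monotonicity bookkeeping.
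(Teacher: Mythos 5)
Your proof is correct and follows essentially the same route as the paper: the non-trivial case $|x|\le \uh^{-1}(1/t)$ is handled by the same chain $[\uh^{-1}(1/t)]^{-d}\le |x|^{-d}$, $1/t=\uh(\uh^{-1}(1/t))\le \uh(|x|)\le c\,\uK(|x|)$, with the last step coming from Lemma~\ref{lem:equiv_scal_h} applied under \eqref{eq:wlsc:h}. The only cosmetic difference is that you dispose of the region $|x|>\uh^{-1}(1/t)$ by a direct monotonicity argument instead of citing Lemma~\ref{lem:sol:rho}, which is equally valid.
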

\pf
By Lemma~\ref{lem:sol:rho} it suffices to consider $r_0<|x|\leq \uh^{-1}(1/t)$. Then by 
\cite[Lemma~2.3]{TGKS-2019},
$$
\varphi_t(x)= [\uh^{-1}(1/t)]^{-d}
= t \uh(\uh^{-1}(1/t)) [\uh^{-1}(1/t)]^{-d} 
\leq 
c\, t \uK(|x|)|x|^{-d}
=
\rr_t(x)\,.
$$
\qed

\section{Appendix - equivalent conditions}
\label{sec:aux}

Let $Y$ be a L{\'e}vy process in $\Rd$
with a generating triplet $(A,\LM,\drf)$.
We assume $h(0^+)=\infty$.

\begin{lemma}\label{lem:eqiv_h_LCh}
The following are equivalent.
\begin{itemize}
\item[\Nb] There are $\hat{T}_1 \in (0,\infty]$, $\hat{c}_1>0$ such that for all $t<\hat{T}_1$,
$$
\int_{\Rd} e^{-t \, {\rm Re}[\LCh(z)]} \,dz \leq \hat{c}_1 \left[\LCh^{-1}(1/t)\right]^{d}\,.
$$
\item[\Nc] There is $\hat{T}_2\in(0,\infty]$ such that
for some (every) $m\in\N$ there is $\hat{c}_2>0$ and for all $t<\hat{T}_2$,
$$
\int_{\Rd}|z|^m e^{-t \, {\rm Re}[\LCh(z)]}  \,dz \leq \hat{c}_2 \left[\LCh^{-1}(1/t)\right]^{d+m}\,.
$$
\item[\Na] There are $T_2\in (0,\infty]$, $c_2>0$ such that for all $t<T_2$,
$$
\int_{\Rd} e^{-t\, {\rm Re}[\LCh(z)]} dz \leq c_2 \left[h^{-1}(1/t)\right]^{-d}.
$$
\item[\Nd]
There are $T_3\in (0,\infty]$, $c_3\in (0,1]$ and $\alpha_3\in (0,2]$ such that for all $|x|>1/T_3$,

$$c_3\,  \LCh^*(|x|)\leq {\rm Re}[\LCh(x)]\qquad \mbox{and}
\qquad 
\LCh^*(\lambda r)\geq c_3 \lambda^{\alpha_3} \LCh^*(r)\,,\quad \lambda \geq 1,\,r>1/T_3\,.
$$ 
\end{itemize}
\end{lemma}

\pf
Combining \eqref{ineq:comp_TJ} with Lemma~\ref{lem:inverse_c} we have
with $c_d=16(1+2d)$,
\begin{align}\label{ineq:comp_TJ_inverse}
\frac{1}{h^{-1}(u/2)} \leq \LCh^{-1}(u)\leq \frac{1}{h^{-1}((c_d/2) u)}\,,\qquad u>0\,.
\end{align}
$\Nb \implies  \Na$.
Using \eqref{ineq:comp_TJ_inverse} and
\cite[Lemma~2.4 and~2.3]{TGKS-2019}
we obtain \cite[(A2)]{TGKS-2019}
 with $\lah=\lah(d,\hat{c}_1)$, $C_h=C_h(d,\hat{c}_1)$ 
 and $\theta_h=h^{-1}(c_d/(2\hat{T}_1))$,
 and leads to $\Cb$
with $c_2=c_2(d,\hat{c}_1)$ and $T_2=2\hat{T}_1/c_d$.

\noindent
$\Na \implies  \Nc$. Again,
\cite[Lemma~2.4 and~2.3]{TGKS-2019}
guarantee 
\cite[(A2)]{TGKS-2019}
 with $\lah=\lah(d,c_2)$, $C_h=C_h(d,c_2)$ 
 and $\theta_h=h^{-1}(1/T_2)$.
 Therefore, $h^{-1}(1/t)\geq (2C_h)^{-1/\lah} h^{-1}(1/(2t))$ for $t<T_2/2$.
Given $m\in\N$ we apply
\cite[Theorem~3.1 and Proposition~3.6]{TGKS-2019},
the latter inequality and 
\eqref{ineq:comp_TJ_inverse}
to reach $\Nc$
with $\hat{c}_2=\hat{c}_2(d,m,c_2)$ and $\hat{T}_2=T_2/2$.

\noindent
$\Nc \implies  \Nb$.
We have
\begin{align*}
\int_{\Rd} e^{-t\, {\rm Re}[\LCh(z)]}\, dz
\leq \int_{|z|\leq \LCh^{-1}(1/t)}dz+  \left[\LCh^{-1}(1/t)\right]^{-m} \int_{|z|>\LCh^{-1}(1/t)}|z|^m e^{-t\, {\rm Re}[\LCh(z)]}\, dz,
\end{align*}
which gives $\Nb$ with $\hat{c}_1=\hat{c}_1(d,\hat{c}_2)$ and $\hat{T}_1=\hat{T}_2$.

\noindent
$\Na \iff \Nd$ It  was shown in \cite[Theorem~3.1]{TGKS-2019}.

\qed

\begin{lemma}\label{lem:inverse_c}
Let $\phi,\varphi:[0,\infty)\mapsto [0,\infty)$ be non-decreasing, continuous, $\phi(0)=\varphi(0)=0$ and $\lim_{r\to \infty}\phi(r)=\lim_{r\to \infty}\varphi(r) =\infty$.
Define (similarly for $\varphi$)
\begin{align*}
\phi^{-1}(s) &= \sup \{r > 0 \colon \phi (r)= s\}
=\sup \{r> 0 \colon \phi (r)\leq  s\},\quad s>0.
\end{align*}
Provided $c>0$,
if $c^{-1}\phi (r)\leq \varphi(r)$ for $r>0$, then $\varphi^{-1}(s)\leq \phi^{-1}(cs)$ for $s>0$. 
\end{lemma}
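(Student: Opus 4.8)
The plan is to work directly from the ``$\leq$'' description of the generalized inverse, namely $\phi^{-1}(s)=\sup\{r>0\colon \phi(r)\leq s\}$, and likewise for $\varphi$. First I would record that the relevant sets are nonempty and bounded: since $\varphi$ is continuous with $\varphi(0)=0<s$, we have $\varphi(r)\leq s$ for all sufficiently small $r>0$, so $\{r>0\colon\varphi(r)\leq s\}$ is nonempty; and since $\varphi(r)\to\infty$, this set is bounded above, so its supremum is a finite positive number. The same remarks apply to $\{r>0\colon\phi(r)\leq cs\}$, so $\phi^{-1}(cs)\in(0,\infty)$ and the asserted inequality is a statement about genuine real numbers.

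Before the main step I would check that the two displayed formulas for $\phi^{-1}$ agree. Writing $r^*=\sup\{r>0\colon\phi(r)\leq s\}$, monotonicity makes $\{r>0\colon\phi(r)\leq s\}$ an interval and continuity of $\phi$ gives $\phi(r^*)\leq s$, while $\phi(r)>s$ for $r>r^*$; hence $\phi(r^*)=s$ by continuity (the level set is nonempty since $\phi(0)=0\leq s\leq\lim_{r\to\infty}\phi(r)$), and $\{r>0\colon\phi(r)=s\}$ is contained in $[0,r^*]$ and contains $r^*$, so its supremum is also $r^*$. This is the only place where the continuity and the two limit hypotheses are used.

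The core of the argument is a one-line monotonicity comparison. Fix $s>0$ and let $r>0$ be any point with $\varphi(r)\leq s$. The hypothesis $c^{-1}\phi(r)\leq\varphi(r)$ then yields $\phi(r)\leq c\varphi(r)\leq cs$, so $r$ belongs to $\{\rho>0\colon\phi(\rho)\leq cs\}$ and therefore $r\leq\sup\{\rho>0\colon\phi(\rho)\leq cs\}=\phi^{-1}(cs)$. Thus $\phi^{-1}(cs)$ is an upper bound for $\{r>0\colon\varphi(r)\leq s\}$, and passing to the supremum over all such $r$ gives $\varphi^{-1}(s)\leq\phi^{-1}(cs)$, as claimed.

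I do not anticipate a genuine obstacle: the only thing to handle with care is the edge-case bookkeeping (nonemptiness and finiteness of the sets involved, and the equivalence of the two descriptions of the inverse), which is precisely what the continuity and the limit conditions guarantee. Note that monotonicity of $\phi$ and $\varphi$ is not needed for the core inequality itself, only for the clean interval description underlying the equivalence of the two formulas for the inverse.
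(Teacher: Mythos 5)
Your proof is correct: the core step (any $r$ with $\varphi(r)\leq s$ satisfies $\phi(r)\leq c\varphi(r)\leq cs$, so it is bounded by $\phi^{-1}(cs)$, and one passes to the supremum) is exactly the intended argument, and your bookkeeping on nonemptiness, finiteness, and the agreement of the two descriptions of the generalized inverse is accurate. The paper states this lemma without any proof, treating it as immediate, so there is no alternative approach to compare against.
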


\section*{Acknowledgment}
The authors thank A. Bendikov, K. Bogdan, A.~Grigor'yan, S. Molchanov and  R. Schilling and P. Sztonyk
for helpful comments.

\vspace{.1in}


\small
\bibliographystyle{abbrv}

\end{document}